\documentclass[final,onefignum,onetabnum]{siamart251216}

\usepackage{amsmath,amsfonts,amsopn,amssymb}
\usepackage{graphicx}
\usepackage[caption=false]{subfig}
\usepackage{multirow,relsize,makecell}
\usepackage{url}
\ifpdf
\DeclareGraphicsExtensions{.eps,.pdf,.png,.jpg}
\else
\DeclareGraphicsExtensions{.eps}
\fi
\usepackage{algorithm, algorithmic}
\usepackage[shortlabels]{enumitem}
\usepackage{accents}


\numberwithin{theorem}{section}
\newsiamremark{assumption}{Assumption}
\newsiamremark{remark}{Remark}
\newsiamremark{question}{Question}
\newsiamremark{example}{Example}
\crefname{assumption}{Assumption}{Assumptions}
\crefname{remark}{Remark}{Remarks}
\crefname{example}{Example}{Examples}

\headers{Unified analysis of saddle point problems}{Jongho Park}

\title{Unified analysis of saddle point problems via auxiliary space theory\thanks{Submitted to
arXiv.
}}

\author{
Jongho Park\thanks{Applied Mathematics and Computational Sciences Program, Computer, Electrical and Mathematical Science and Engineering Division, King Abdullah University of Science and Technology~(KAUST), Thuwal 23955, Saudi Arabia
 (\email{jongho.park@kaust.edu.sa}).}
}

\ifpdf
\hypersetup{
  pdftitle={Unified analysis of saddle point problems via auxiliary space theory},
  pdfauthor={Jongho Park}
}
\fi

\usepackage[normalem]{ulem}
\usepackage{color}

\begin{document}

\maketitle
\begin{abstract}
We present sharp estimates for the extremal eigenvalues of the Schur complements arising in saddle point problems. 
These estimates are derived using the auxiliary space theory, in which a given iterative method is interpreted as an equivalent but more elementary iterative method on an auxiliary space, enabling us to obtain sharp convergence estimates. 
The proposed framework improves or refines several existing results, which can be recovered as corollaries of our results. 
To demonstrate the versatility of the framework, we present various applications from scientific computing: the augmented Lagrangian method, mixed finite element methods, and nonoverlapping domain decomposition methods. 
In all these applications, the condition numbers of the corresponding Schur complements can be estimated in a straightforward manner using the proposed framework.
\end{abstract}

\begin{keywords}
Saddle point problems,
Schur complements,
Preconditioners,
Auxiliary space theory,
Nonoverlapping domain decomposition methods
\end{keywords}

\begin{AMS}
65F10,  
65N22,  
65N55   
\end{AMS}

\section{Introduction}
\label{Sec:Introduction}
This paper is concerned with iterative methods for solving the general linear saddle point problem defined on finite-dimensional vector spaces~$V$ and $W$:
\begin{equation}
\label{model_saddle}
\begin{bmatrix}
    A & B^t \\ B & 0 
\end{bmatrix}
\begin{bmatrix}
    u \\ p
\end{bmatrix}
=
\begin{bmatrix}
    f \\ g
\end{bmatrix},
\end{equation}
where $A \colon V \to V$ is a linear operator that is either symmetric positive definite (SPD) or symmetric positive semidefinite (semi-SPD), $B \colon V \to W$ is a surjective linear operator, and $f \in V$, $g \in W$ are given data. 
We denote by $(u,p)$ the solution of~\eqref{model_saddle}. 
It is well-known that the system~\eqref{model_saddle} is equivalent to the constrained quadratic optimization problem
\begin{equation}
\label{model_saddle_opt}
\min_{v \in V} \left\{ \frac{1}{2}(Av,v) - (f,v) \right\}
\quad \text{subject to} \quad
Bv = g,
\end{equation}
where $p$ serves as the Lagrange multiplier enforcing the constraint.
Such saddle point problems arise in many areas of scientific computing; see~\cite{BGL:2005} and the references therein.

There are three principal approaches for iterative methods for solving the saddle point problem~\eqref{model_saddle}~\cite{BGL:2005,Xu:2010}. 
The first is to apply iterative methods to the SPD dual problem of~\eqref{model_saddle} obtained in terms of the Schur complement. 
The second is to apply stationary iterations, such as Uzawa-type methods~\cite{BPV:1997} or augmented Lagrangian methods~\cite{FG:1983}. 
The third is to apply Krylov subspace methods, such as MINRES~\cite{PS:1975}, directly to~\eqref{model_saddle} with suitable preconditioners~\cite{MGW:2000}. 
In all three approaches, the properties of the Schur complement play a central role in both the design and the analysis of algorithms.

The purpose of this paper is to present sharp estimates for the extremal eigenvalues of the Schur complements arising in saddle point problems of the form~\eqref{model_saddle}. 
If $A$ is SPD, then the Schur complement $S = B A^{-1} B^t$ has the structure of an auxiliary space preconditioner~\cite{Nepomnyaschikh:1992,Xu:1996}, where $A^{-1}$ corresponds to an iterative method defined on the auxiliary space $V$, and $B$ corresponds to the transfer operator mapping the auxiliary space $V$ to the original space $W$. 
A similar argument also applies in the case when $A$ is semi-SPD. 
Thus, the Schur complement $S$ can be analyzed by means of the recently proposed auxiliary space theory~\cite{PX:2025}, which provides sharp estimates for iterative methods for both SPD and semi-SPD systems in a unified framework.

Estimates for the extremal eigenvalues of the Schur complement when $A$ is SPD are presented in \cref{Thm:Schur}, which are sharp in the sense that they are expressed as identities. 
Sharp estimates for the Schur complement preconditioned by a general SPD preconditioner are given in \cref{Thm:Schur_preconditioned}. 
These results refine existing abstract estimates for the Schur complement (see, e.g.,~\cite{BBF:2013,MS:2007,TW:2005}) in the sense that the earlier results can be recovered as straightforward corollaries. 
While our estimates follow directly from elementary linear algebra within the auxiliary space framework, the existing results required more involved arguments. 
Finally, our results extend naturally to the case when $A$ is semi-SPD, as shown in \cref{Thm:Schur_singular,Thm:Schur_singular_preconditioned}.

To demonstrate the usefulness of the proposed estimates for the Schur complement, we present several applications from scientific computing. 
The first example is the augmented Lagrangian method~\cite{FG:1983}, a stationary iterative method for solving~\eqref{model_saddle}. 
Using the proposed framework, we provide sharp estimates for the error propagation operator of the augmented Lagrangian method. 
In particular, we recover the results of~\cite{LP:2009,LP:2017,LWXZ:2007} showing that the convergence rate of the augmented Lagrangian method can be made arbitrarily fast by choosing the augmented Lagrangian parameter $\epsilon$ sufficiently small. 
The second example is mixed finite element methods for solving mixed variational problems~\cite{BBF:2013}. 
We show how the proposed framework yields simplified analyses of Schur complements arising in mixed finite element discretizations of Darcy flow~\cite{BOS:2018,PH:2025} and the Stokes equations~\cite{ABMXZ:2014,Verfurth:1984}. 
The final example concerns nonoverlapping domain decomposition methods~\cite{TW:2005,XZ:1998}, specifically FETI (finite element tearing and interconnecting)~\cite{FR:1991} and FETI-DP (dual--primal FETI)~\cite{FLLPR:2001,KWD:2002}. 
For both unpreconditioned and preconditioned versions of FETI and FETI-DP, we provide simplified convergence analyses using the proposed framework.

The remainder of this paper is organized as follows. 
In \cref{Sec:Auxiliary}, we briefly summarize the auxiliary space theory. 
In \cref{Sec:Saddle}, we review well-posedness, Schur complement systems, and strategies for designing iterative methods for saddle point problems. 
In \cref{Sec:Schur}, we present sharp estimates for the extremal eigenvalues of the Schur complements. 
In \cref{Sec:ALM,Sec:Mixed,Sec:FETI}, we demonstrate applications of our results to problems arising in scientific computing, including the augmented Lagrangian method, mixed finite element methods, and nonoverlapping domain decomposition methods. 
Finally, in \cref{Sec:Conclusion}, we conclude with some remarks.

\subsection{Notation}
We summarize here the notation used throughout the paper. 
Let $V$ and $W$ be finite-dimensional vector spaces equipped with inner products $(\cdot,\cdot)$ and the associated norms $\|\cdot\|$. 
For any subspace $X \subset V$, we denote by $X^\perp$ its orthogonal complement. 
For a linear operator $L \colon V \to W$, the null space and the range are denoted by $\mathcal{N}(L)$ and $\mathcal{R}(L)$, respectively, and the adjoint of $L$ is denoted by $L^t \colon W \to V$. 

Following standard convention in the literature~(see, e.g.,~\cite{Xu:1992}), we write $x \lesssim y$, or equivalently $y \gtrsim x$, to mean that there exists a constant $C>0$, independent of the relevant parameters, such that $x \leq C y$. 
Furthermore, we write $x \eqsim y$ when both $x \lesssim y$ and $x \gtrsim y$ hold.

\section{Auxiliary space theory}
\label{Sec:Auxiliary}
In this section, we provide a brief summary of the auxiliary space theory~\cite{PX:2025}, which is a useful tool for the analysis of linear systems.
In particular, we present sharp estimates for certain linear operators defined in terms of auxiliary spaces.

We begin with the auxiliary space lemma~\cite[Lemma~3.1]{PX:2025}, which is closely related to the fictitious space method~\cite{Nepomnyaschikh:1992} and the auxiliary space method~\cite{Xu:1996}. 
This lemma plays an important role in the analysis developed in this paper. 
The idea of the lemma can be traced back to~\cite{Xu:1992}, and it has since been widely used in the design and analysis of iterative methods; see, for example,~\cite{ABMXZ:2014,Chen:2011}.

\begin{lemma}[auxiliary space lemma]
\label{Lem:aux_space_lemma}
Let $V$ and $\undertilde{V}$ be finite-dimensional vector spaces and let $\Pi \colon \undertilde{V} \to V$ be a surjective linear operator.
Let $\undertilde{M} \colon \undertilde{V} \to \undertilde{V}$ be an SPD linear operator, and define
\begin{equation}
\label{B_aux}
M = \Pi \undertilde{M} \Pi^t \colon V \to V.
\end{equation}
Then $M$ is SPD.
Moreover, it satisfies
\begin{equation*}
( M^{-1} v, v ) = \inf_{\undertilde{v} \in \undertilde{V},\ \Pi \undertilde{v} = v } ( \undertilde{M}^{-1} \undertilde{v}, \undertilde{v} ),
\quad v \in V.
\end{equation*}
\end{lemma}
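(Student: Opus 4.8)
The plan is to treat the two assertions separately: first the SPD property of $B$, which follows directly from the definition, and then the variational identity, which I would establish by exhibiting the minimizer explicitly rather than invoking an abstract existence argument.

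For the first part, symmetry is immediate: since $\undertilde{B}^t = \undertilde{B}$, we have $B^t = (\Pi \undertilde{B} \Pi^t)^t = \Pi \undertilde{B} \Pi^t = B$. For positivity I would write $(Bv,v) = (\undertilde{B}\Pi^t v, \Pi^t v)$, which is nonnegative because $\undertilde{B}$ is SPD and vanishes only when $\Pi^t v = 0$. The crucial observation is that surjectivity of $\Pi$ forces $\Pi^t$ to be injective, since $\mathcal{N}(\Pi^t) = \mathcal{R}(\Pi)^\perp = V^\perp = \{0\}$; hence $\Pi^t v = 0$ implies $v = 0$. This shows $B$ is SPD and in particular invertible, so the operator $B^{-1}$ in the statement is well-defined and the constraint set $\{\undertilde{v} : \Pi\undertilde{v} = v\}$ is nonempty for every $v$.

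For the identity, the key step is to guess the minimizer of the constrained quadratic $\min\,(\undertilde{B}^{-1}\undertilde{v},\undertilde{v})$ over $\{\undertilde{v} : \Pi\undertilde{v} = v\}$. I would propose the candidate $\undertilde{v}_0 = \undertilde{B}\Pi^t B^{-1} v$, which is motivated by the Lagrange/KKT optimality condition that $\undertilde{B}^{-1}\undertilde{v}$ should lie in $\mathcal{R}(\Pi^t)$. One then verifies directly that $\undertilde{v}_0$ is feasible, since $\Pi\undertilde{v}_0 = \Pi\undertilde{B}\Pi^t B^{-1} v = B B^{-1} v = v$, and records the useful identity $\undertilde{B}^{-1}\undertilde{v}_0 = \Pi^t B^{-1} v$.

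The argument is then completed by a completing-the-square (orthogonality) computation. Writing an arbitrary feasible point as $\undertilde{v} = \undertilde{v}_0 + \undertilde{w}$ with $\undertilde{w} \in \mathcal{N}(\Pi)$, the cross term becomes $(\undertilde{B}^{-1}\undertilde{v}_0, \undertilde{w}) = (\Pi^t B^{-1} v, \undertilde{w}) = (B^{-1} v, \Pi\undertilde{w}) = 0$, so that $(\undertilde{B}^{-1}\undertilde{v},\undertilde{v}) = (\undertilde{B}^{-1}\undertilde{v}_0,\undertilde{v}_0) + (\undertilde{B}^{-1}\undertilde{w},\undertilde{w})$. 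Since $\undertilde{B}^{-1}$ is SPD, this is minimized precisely at $\undertilde{w}=0$, so the infimum is attained at $\undertilde{v}_0$ with value $(\undertilde{B}^{-1}\undertilde{v}_0,\undertilde{v}_0) = (\Pi^t B^{-1} v, \undertilde{v}_0) = (B^{-1} v, \Pi\undertilde{v}_0) = (B^{-1} v, v)$, which is the claimed identity. I expect the only real obstacle to be identifying the correct candidate $\undertilde{v}_0$; once that guess is in hand every remaining step is a routine adjoint manipulation, and no compactness or abstract existence theory is needed because the minimizer is produced by hand.
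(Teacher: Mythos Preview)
Your proof is correct. The paper does not actually prove this lemma---it is quoted from \cite[Lemma~3.1]{PX:2025} and stated without proof---so there is no in-paper argument to compare against. Your route (injectivity of $\Pi^t$ for the SPD claim, then the explicit minimizer $\undertilde{v}_0 = \undertilde{B}\Pi^t B^{-1} v$ together with the $\undertilde{B}^{-1}$-orthogonality of $\undertilde{v}_0$ and $\mathcal{N}(\Pi)$) is the standard direct argument for this identity and every step checks out.
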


Now, let $L \colon V \to V$ and $M \colon V \to V$ be SPD linear operators, and consider the situation where $M$ is used as a preconditioner for the linear system associated with $L$. 
In this case, the convergence rate of iterative methods can be estimated in terms of the condition number of the preconditioned operator $M L$, given by
\begin{equation*}
    \kappa(M L) = \frac{\lambda_{\max}(M L)}{\lambda_{\min}(M L)},
\end{equation*}
where $\lambda_{\min}$ and $\lambda_{\max}$ denote the minimum and maximum eigenvalues, respectively.

The following theorem, introduced in~\cite[Theorem~3.6]{PX:2025} and proved using \cref{Lem:aux_space_lemma}, provides sharp estimates for the extremal eigenvalues of $M L$ when the preconditioner $M$ is defined by~\eqref{B_aux} for some SPD linear operator $\undertilde{M} \colon \undertilde{V} \to \undertilde{V}$ and a surjective linear operator $\Pi \colon \undertilde{V} \to V$. 
Here, $\undertilde{V}$ is a finite-dimensional vector space, referred to as an auxiliary space. 
Thus, the preconditioner $M$ is constructed in terms of the operator $\undertilde{M}$ defined on the auxiliary space $\undertilde{V}$.

\begin{theorem}
\label{Thm:condition_number_aux}
Let $L \colon V \to V$ be an SPD linear operator, and let $M \colon V \to V$ be given in~\eqref{B_aux}.
Then we have
\begin{align*}
\lambda_{\min} (M L) &= \left( \sup_{ v \in V,\ \| v \|_{L} = 1} \inf_{\undertilde{v} \in \undertilde{V},\ \Pi \undertilde{v} = v } ( \undertilde{M}^{-1} \undertilde{v}, \undertilde{v} ) \right)^{-1}, \\
\lambda_{\max} (M L) &= \left( \inf_{ v \in V,\ \| v \|_{L} = 1} \inf_{ \undertilde{v} \in \undertilde{V},\ \Pi \undertilde{v} = v } ( \undertilde{M}^{-1} \undertilde{v}, \undertilde{v} ) \right)^{-1}.
\end{align*}
\end{theorem}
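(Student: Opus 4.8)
The plan is to first dispose of the inner infimum using \cref{Lem:aux_space_lemma}. Since $B = \Pi \undertilde{B} \Pi^t$ with $\Pi$ surjective and $\undertilde{B}$ SPD, that lemma guarantees $B$ is SPD and gives, for every $v \in V$, the identity $(B^{-1}v,v) = \inf_{\undertilde{v}\in\undertilde{V},\ \Pi\undertilde{v}=v}(\undertilde{B}^{-1}\undertilde{v},\undertilde{v})$. It therefore suffices to prove the two identities
\[
\lambda_{\min}(BA) = \Bigl( \sup_{v\in V,\ \|v\|_A=1}(B^{-1}v,v)\Bigr)^{-1},\qquad
\lambda_{\max}(BA) = \Bigl( \inf_{v\in V,\ \|v\|_A=1}(B^{-1}v,v)\Bigr)^{-1},
\]
working purely with the two SPD operators $A$ and $B$ on $V$.

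Next I would reduce these to a Rayleigh-quotient statement. Although $BA$ is not symmetric with respect to $(\cdot,\cdot)$, it is similar to the SPD operator $B^{1/2}AB^{1/2}$ via the invertible map $B^{1/2}$ (which exists because $B$ is SPD); hence $BA$ has real, positive eigenvalues that coincide with those of $B^{1/2}AB^{1/2}$. Writing the Rayleigh quotient of $B^{1/2}AB^{1/2}$ at a vector $w$ and substituting $v = B^{1/2}w$, a bijection of $V$, yields
\[
\frac{(B^{1/2}AB^{1/2}w,w)}{(w,w)} = \frac{(Av,v)}{(B^{-1}v,v)} = \frac{\|v\|_A^2}{(B^{-1}v,v)},
\]
so that $\lambda_{\min}(BA) = \inf_{v\neq 0}\|v\|_A^2/(B^{-1}v,v)$ and $\lambda_{\max}(BA) = \sup_{v\neq 0}\|v\|_A^2/(B^{-1}v,v)$.

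Finally, I would exploit homogeneity: both $\|v\|_A^2$ and $(B^{-1}v,v)$ are quadratic forms, so the quotient is scale-invariant and the extrema may be taken over the normalized set $\{\|v\|_A = 1\}$, on which $\|v\|_A^2 = 1$. Passing to reciprocals then interchanges infimum and supremum, turning $\lambda_{\min}(BA) = \inf_{\|v\|_A=1}(B^{-1}v,v)^{-1}$ into $\bigl(\sup_{\|v\|_A=1}(B^{-1}v,v)\bigr)^{-1}$, and likewise for $\lambda_{\max}$; combining this with the first step completes the proof. The only point requiring genuine care is the second step: because $BA$ fails to be self-adjoint in the given inner product, one must pass through the similarity with $B^{1/2}AB^{1/2}$ (equivalently, observe that $BA$ is self-adjoint in the $A$-inner product) before invoking the variational characterization of eigenvalues, since applying the Rayleigh-quotient formula to $BA$ directly would not be justified.
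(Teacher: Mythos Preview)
Your proposal is correct and follows precisely the approach the paper indicates: it cites this result from \cite{PX:2025} and states only that it is ``proved using \cref{Lem:aux_space_lemma},'' which is exactly your first reduction step, after which the remaining Rayleigh-quotient manipulation via the similarity $BA \sim B^{1/2}AB^{1/2}$ is the standard argument.
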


The result in Theorem~\ref{Thm:condition_number_aux} can be extended to more general situations, for instance when $L$ is only semi-SPD, $M$ is a general linear operator, and $\undertilde{M}$ is semi-SPD. 
Further details on these extensions can be found in~\cite{PX:2025}.

\section{Saddle point problems}
\label{Sec:Saddle}
For completeness, we review the essentials of saddle point problems: the problem statement, well-posedness, the Schur complement system, and strategies for designing iterative methods.
For comprehensive literature on saddle point problems, we refer the reader to, e.g.,~\cite{BGL:2005,BBF:2013}.

\subsection{Model problem}
We consider the model saddle point problem~\eqref{model_saddle}. 
Since $B$ is surjective, we have (see~\cite{XZ:2003})
\begin{equation}
\label{Brezzi}
\inf_{q \in W,\ \| q \| = 1} \sup_{v \in V,\ \| v \| = 1}
(Bv, q) = \| B^{-1} \|^{-1} > 0,
\end{equation}
where $B$ is regarded as an isomorphism from $\mathcal{N}(B)^{\perp}$ onto $W$. 
In other words, the Babu\v{s}ka--Brezzi (inf--sup) condition~\cite{BBF:2013} holds.

Moreover, under the surjectivity assumption on $B$, the system operator
\begin{equation*}
\mathcal{A} = 
\begin{bmatrix}
    A & B^t \\ B & 0 
\end{bmatrix}
\colon V \times W \to V \times W
\end{equation*}
associated with the problem~\eqref{model_saddle} is nonsingular if and only if
\begin{equation}
\label{well-posed}
\mathcal{N}(A) \cap \mathcal{N}(B) = \{0\},
\end{equation}
as established in~\cite[Theorem~3.4]{BGL:2005}.
In what follows, we always assume that the condition~\eqref{well-posed} holds to ensure the well-posedness of~\eqref{model_saddle}.

\subsection{Elimination of the primal variable}
From the saddle point system~\eqref{model_saddle}, one can eliminate the primal variable $u$ and obtain a reduced system formulated solely in terms of the dual variable $p$. 
Assume first that $A$ is SPD. 
From the first equation in~\eqref{model_saddle}, we have
\begin{equation}
\label{u_elimination}
u = A^{-1}(f - B^t p),
\end{equation}
and substituting~\eqref{u_elimination} into the second equation of~\eqref{model_saddle} yields the dual problem
\begin{equation}
\label{Schur}
S p = d,
\quad \text{where} \quad
S = B A^{-1} B^t \colon W \to W, \quad
d = B A^{-1} f - g \in W.
\end{equation}
In~\eqref{Schur}, $S$ is called the Schur complement. 
Because $A$ is SPD and $B$ is surjective, $S$ is of the form~\eqref{B_aux}. 
In particular, by \cref{Lem:aux_space_lemma}, we deduce that $S$ is SPD. 
Hence, the saddle point system~\eqref{model_saddle} is equivalent to the SPD linear system~\eqref{Schur} for $p$, followed by a back substitution to recover $u$.

The case when $A$ is only semi-SPD is more delicate, since~\eqref{u_elimination} is not valid. 
Nevertheless, elimination of the primal variable $u$ is still possible when full information about the null space $\mathcal{N}(A)$ is available, that is, when an explicit basis of $\mathcal{N}(A)$ is known. 
Such a situation arises, for example, in Neumann boundary value problems for the Poisson equation and linear elasticity~\cite{FR:1991}.

In the following, we summarize the arguments from~\cite[Section~2.2]{Pechstein:2013} and~\cite[Section~6.3]{TW:2005} concerning the elimination of $u$. 
Let
\[
N \colon \mathbb{R}^{\dim \mathcal{N}(A)} \to \mathcal{N}(A)
\]
be an injective linear operator with range $\mathcal{N}(A)$. 
Such a map $N$ can be constructed as the matrix whose columns form a basis of $\mathcal{N}(A)$. 
From the first equation of~\eqref{model_saddle}, compatibility requires $f - B^t p \in \mathcal{R}(A)$, which is equivalent to
\begin{equation}
\label{compatibility}
N^t(f - B^t p) = 0.
\end{equation}
In this case, $u$ satisfies
\begin{equation}
\label{u_elimination_singular}
u = A^{+}(f - B^t p) + N\xi,
\end{equation}
for some $\xi \in \mathbb{R}^{\dim \mathcal{N}(A)}$, where $A^{+} \colon V \to V$ denotes a pseudoinverse of $A$, i.e., $AA^+$ acts as the identity on $\mathcal{R} (A)$. 
Substituting~\eqref{u_elimination_singular} into the second equation of~\eqref{model_saddle} yields
\begin{equation}
\label{Schur_singular_intermediate}
S p - B N \xi = d, \quad \text{where} \quad
S = B A^+ B^t \colon W \to W,\
d = B A^+f - g \in W.
\end{equation}
Note that~\eqref{Schur_singular_intermediate} reduces to~\eqref{Schur} when $A$ is SPD.

Now,~\eqref{compatibility} and~\eqref{Schur_singular_intermediate} form a coupled system in the variables $p$ and $\xi$. 
We next eliminate $\xi$. 
Since $\mathcal{R} (N) =\mathcal{N}(A)$, the condition~\eqref{well-posed} implies that $BN$ is injective. 
Hence the orthogonal projection onto
\begin{equation}
\label{W0}
W_0 := \mathcal{R} (B|_{\mathcal{N}(A)})^{\perp} = \mathcal{R}(BN)^{\perp} \subset W
\end{equation}
is well defined and given by
\begin{equation}
\label{P}
P = I - BN ((BN)^t BN)^{-1} (BN)^t \colon W \to W_0.
\end{equation}
The computation of $P$ requires solving a linear system involving $(BN)^t BN$, which can be carried out at negligible cost when $\dim \mathcal{N}(A)$ is small.

We decompose $p$ orthogonally into components in $W_0$ and $W_0^{\perp} = \mathcal{R} (B|_{\mathcal{N}(A)})$:
\begin{equation}
\label{p_decomposition}
    p = P^t p_0 + BN \eta, 
    \quad p_0 \in W_0,\ 
    \eta \in \mathbb{R}^{\dim \mathcal{N}(A)}.
\end{equation}
Substituting~\eqref{p_decomposition} into~\eqref{compatibility} gives
\begin{equation*}
    \eta = ((BN)^t BN )^{-1} N^t f.
\end{equation*}
Note that $\eta$ can be computed prior to the main iterations of an iterative method. 
Finally, substituting~\eqref{p_decomposition} into~\eqref{Schur_singular_intermediate} and applying $P$ to both sides yields
\begin{equation}
\label{Schur_singular}
    S_0 p_0 = d_0,
    \quad \text{where} \quad
    S_0 = P S P^t \colon W_0 \to W_0,\ d_0 = P(d - SBN \eta ) \in W_0.
\end{equation}
Thus, we arrive at a linear system in the single unknown $p_0$, which represents the dual problem in the case when $A$ is semi-SPD. 
\cref{Prop:Schur_singular} shows that $S_0$ is SPD and independent of the particular choice of the pseudoinverse $A^+$~(cf.~\cite[Lemma~2.19]{Pechstein:2013}).

\begin{proposition}
\label{Prop:Schur_singular}
The projected Schur complement $S_0 \colon W_0 \to W_0$ given in~\eqref{Schur_singular} is independent of the particular choice of the pseudoinverse $A^+$.
Moreover, $S_0$ is SPD.
\end{proposition}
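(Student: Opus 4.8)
The plan is to reduce both assertions to two elementary structural facts and then carry out short subspace chases. The first fact I would isolate at the outset is the implication
\begin{equation*}
p_0 \in W_0 \quad \Longrightarrow \quad B^t p_0 \in \mathcal{R}(A),
\end{equation*}
which holds because $W_0 = \mathcal{R}(BN)^\perp$ forces $(B^t p_0, N\zeta) = (p_0, BN\zeta) = 0$ for every $\zeta$; since $\mathcal{R}(N) = \mathcal{N}(A)$ and $A$ is symmetric, this means $B^t p_0 \perp \mathcal{N}(A)$, i.e., $B^t p_0 \in \mathcal{N}(A)^\perp = \mathcal{R}(A)$. The second fact is that $P$ is the orthogonal projection onto $W_0$, so it fixes $W_0$ and satisfies $P^t p_0 = p_0$ for $p_0 \in W_0$; consequently $S_0 p_0 = P S p_0$ and $(S_0 p_0, p_0) = (S p_0, p_0)$ for all $p_0 \in W_0$.

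For the independence claim, I would take two pseudoinverses $A_1^+$ and $A_2^+$ and set $D = A_1^+ - A_2^+$. The defining relation $A A_i^+ v = v$ for $v \in \mathcal{R}(A)$ yields $A D v = 0$, that is, $D\big(\mathcal{R}(A)\big) \subseteq \mathcal{N}(A)$. The corresponding difference of the projected Schur complements is $\Delta S_0 = P B D B^t P^t$, and I would trace it through the relevant spaces: for $p_0 \in W_0$ the first fact gives $B^t p_0 \in \mathcal{R}(A)$, hence $D B^t p_0 \in \mathcal{N}(A) = \mathcal{R}(N)$, so $B D B^t p_0 \in \mathcal{R}(BN) = W_0^\perp$, which the projection $P$ annihilates. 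Thus $\Delta S_0 p_0 = 0$ for every $p_0 \in W_0$, and $S_0$ is independent of the choice of $A^+$.

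Having secured independence, I would establish that $S_0$ is SPD by evaluating it with the most convenient pseudoinverse, the Moore--Penrose pseudoinverse $A^\dagger$, which for symmetric $A$ is itself symmetric and positive semidefinite with $\mathcal{N}(A^\dagger) = \mathcal{N}(A)$. Then $S = B A^\dagger B^t$ is symmetric positive semidefinite, and so is $S_0 = P S P^t$; symmetry of $S_0$ for an arbitrary pseudoinverse then follows from the independence just proved. For definiteness, suppose $(S_0 p_0, p_0) = 0$ with $p_0 \in W_0$. By the second fact this equals $(A^\dagger B^t p_0, B^t p_0)$, and positive semidefiniteness of $A^\dagger$ forces $A^\dagger B^t p_0 = 0$, i.e., $B^t p_0 \in \mathcal{N}(A^\dagger) = \mathcal{N}(A)$. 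Combining with the first fact gives $B^t p_0 \in \mathcal{R}(A) \cap \mathcal{N}(A) = \{0\}$, so $B^t p_0 = 0$; since $B$ is surjective, $B^t$ is injective, whence $p_0 = 0$.

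I expect the main obstacle to be the careful bookkeeping around the pseudoinverse: the defining relation constrains $A^+$ only on $\mathcal{R}(A)$, and the whole argument hinges on recognizing that the orthogonal splitting $\mathcal{R}(A) = \mathcal{N}(A)^\perp$, valid precisely because $A$ is symmetric, is what both drives the subspace chase in the independence proof and produces the contradiction $B^t p_0 \in \mathcal{R}(A) \cap \mathcal{N}(A)$ in the definiteness proof. Once the structural fact $B^t(W_0) \subseteq \mathcal{R}(A)$ is in hand, the remaining steps are immediate.
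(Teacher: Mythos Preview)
Your proof is correct. For the independence claim your argument is the same as the paper's, just unpacked: the paper compresses the whole chase into the single observation that it suffices to show $PB|_{\mathcal{N}(A)} = 0$, which is the adjoint form of your ``first fact'' $B^t(W_0) \subset \mathcal{R}(A)$, and the remaining steps ($D(\mathcal{R}(A)) \subset \mathcal{N}(A)$, hence $PB\,D\,B^tP^t = 0$) are identical in spirit.

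For the SPD claim the two routes genuinely differ. The paper stays within its own framework and invokes the auxiliary space lemma (\cref{Lem:aux_space_lemma}): since $S_0 = (PB)A^+(PB)^t$ with $PB|_{\mathcal{R}(A)}$ surjective (because $PB$ is surjective onto $W_0$ and $PB|_{\mathcal{N}(A)} = 0$), the lemma yields SPD in one stroke. You instead give a self-contained null-space argument via the Moore--Penrose pseudoinverse, reducing $(S_0 p_0, p_0) = 0$ to $B^t p_0 \in \mathcal{R}(A) \cap \mathcal{N}(A) = \{0\}$ and then using injectivity of $B^t$. Your route is more elementary and independent of the paper's machinery; the paper's route is shorter and deliberately ties the proposition into the auxiliary-space theme that drives the later sections. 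Both approaches silently rely on the independence result to justify computing with a symmetric positive semidefinite choice of pseudoinverse, a point you make explicit and the paper leaves implicit.
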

\begin{proof}
Since $S_0 = PB A^+ (PB)^t$, to prove independence on the choice of $A^+$, it suffices to show that $PB|_{\mathcal{N}(A)} = 0$, which follows directly from the definition~\eqref{P} of $P$. 
Meanwhile, to prove that $S_0$ is SPD, it is enough, by \cref{Lem:aux_space_lemma}, to verify that $PB|_{\mathcal{R}(A)}$ is surjective. 
This holds because $PB$ is surjective and $PB|_{\mathcal{N}(A)} = 0$.
\end{proof}

\begin{remark}
\label{Rem:P}
While we define $P$ as the orthogonal projection onto $W_0$ in~\eqref{P}, it is also possible to employ oblique projections, which are often more effective for heterogeneous problems; see~\cite{Pechstein:2013,TW:2005}. 
However, we omit the details here.
\end{remark}

\begin{remark}
\label{Rem:FR}
The dual problems~\eqref{Schur} and~\eqref{Schur_singular} are special cases of Fenchel--Rockafellar duality in convex optimization, since~\eqref{model_saddle} is equivalent to the quadratic optimization problem~\eqref{model_saddle_opt}. 
For further details on Fenchel--Rockafellar duality, we refer the reader to~\cite{JPX:2025}.
\end{remark}

\subsection{Iterative methods}
Here, we discuss iterative methods for solving the saddle point problem~\eqref{model_saddle}.
For simplicity, we assume that $A$ is SPD.
Based on the surveys~\cite{BGL:2005,Xu:2010}, there are three major approaches to iterative methods.

The first approach is to apply iterative methods for the SPD dual problem~\eqref{Schur} (\eqref{Schur_singular} in the semi-SPD case).
In this case, we can utilize vast existing results on iterative methods for SPD linear systems.
See~\cite{PX:2025} for a systematic approach to the design and analysis of iterative methods for SPD linear systems.

The second approach is to apply stationary iterations, such as Uzawa-type methods~\cite{BPV:1997} or the augmented Lagrangian method~\cite{FG:1983}, to~\eqref{model_saddle}. 
It is well-known that the classical Uzawa method and the augmented Lagrangian method are equivalent to certain iterative schemes for solving the dual problem~\eqref{Schur}~\cite{BGL:2005}. 
Moreover, for general inexact Uzawa methods, their convergence behavior is determined by properties of the Schur complement~\cite{BPV:1997}.

The third approach is to apply preconditioned Krylov subspace methods, such as MINRES~\cite{PS:1975}, to~\eqref{model_saddle}. 
It was shown in~\cite{MGW:2000} that the block-diagonal preconditioner
\begin{equation*}
\mathcal{L} = 
\begin{bmatrix} 
A^{-1} & 0 \\ 
0 & S^{-1} 
\end{bmatrix}
\end{equation*}
is optimal in the sense that the preconditioned operator $\mathcal{L}\mathcal{A}$ has only three distinct eigenvalues, namely $1$ and $\tfrac{1}{2}(1 \pm \sqrt{5})$. 
Thus, in this case, the central task is the construction of efficient preconditioners for $A$ and $S$~\cite{BGL:2005}.

In all three approaches, the properties of the Schur complement $S$ are central to both design and analysis of algorithms. 
Hence, in the remainder of this paper, we focus on the analysis of $S$. 
In particular, we investigate how $S$ can be studied in a concise and sharp manner using the auxiliary space theory introduced in \cref{Sec:Auxiliary}.

\section{Estimates for the Schur complements}
\label{Sec:Schur}
In this section, we present useful estimates for the extremal eigenvalues of the Schur complements given in~\eqref{Schur} and~\eqref{Schur_singular}. 
Although these estimates are derived in a relatively straightforward manner using the auxiliary space theory, they provide bounds that are equivalent to, or even sharper than, existing results, for which more complicated proofs were often required.

We begin with the case when $A$ is SPD. 
An obvious but important observation is that the Schur complement $S$ has the same structure as in~\eqref{B_aux}. 
In \cref{Thm:Schur}, we establish sharp estimates for the extremal eigenvalues of $S$ defined in~\eqref{Schur}, obtained as direct consequences of \cref{Thm:condition_number_aux}.

\begin{theorem}
\label{Thm:Schur}
In the saddle point problem~\eqref{model_saddle}, suppose that $A$ is SPD.
Then the Schur complement $S$ given in~\eqref{Schur} satisfies
\begin{equation*}
    \lambda_{\min} (S)
    = \inf_{0 \neq q \in W} \sup_{v \in V,\ Bv = q} \frac{\| q \|^2}{(Av, v)}, \quad
    \lambda_{\max} (S)
    = \sup_{0 \neq v \in V} \frac{\| Bv \|^2}{(Av, v)}.
\end{equation*}
\end{theorem}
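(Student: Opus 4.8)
The plan is to recognize the Schur complement $S = BA^{-1}B^t$ as an instance of the structure~\eqref{B_aux} and then invoke \cref{Thm:condition_number_aux}. The one subtlety is that \cref{Thm:condition_number_aux} reports the extremal eigenvalues of a \emph{product} of a preconditioned operator with its preconditioner, so the device for isolating the eigenvalues of $S$ itself is to let the preconditioned operator be the identity. Explicitly, I would instantiate \cref{Lem:aux_space_lemma,Thm:condition_number_aux} with ambient space $W$, auxiliary space $\undertilde{V} = V$, surjective transfer operator $\Pi = B \colon V \to W$, and SPD auxiliary operator $\undertilde{B} = A^{-1} \colon V \to V$. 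Here $B$ is surjective and $A^{-1}$ is SPD precisely by the standing hypotheses. Then \eqref{B_aux} gives $\Pi \undertilde{B} \Pi^t = B A^{-1} B^t = S$, and taking the preconditioned operator in \cref{Thm:condition_number_aux} to be the identity $I_W$ on $W$ makes the relevant product equal to $S I_W = S$; its eigenvalues are therefore read off directly from the theorem.

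Substituting into the formulas of \cref{Thm:condition_number_aux}, and using that $\undertilde{B}^{-1} = A$ turns $(\undertilde{B}^{-1}\undertilde{v},\undertilde{v})$ into $(Av,v)$, the constraint $\Pi\undertilde{v} = v$ into $Bv = q$, and the normalization $\|v\|_{I_W} = \|q\|$ into $\|q\| = 1$, I obtain
\begin{align*}
\lambda_{\min}(S) &= \left( \sup_{q \in W,\ \|q\| = 1}\ \inf_{v \in V,\ Bv = q} (Av, v) \right)^{-1}, \\
\lambda_{\max}(S) &= \left( \inf_{q \in W,\ \|q\| = 1}\ \inf_{v \in V,\ Bv = q} (Av, v) \right)^{-1}.
\end{align*}

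It then remains only to recast these into the homogeneous forms in the statement, which is routine. For $\lambda_{\min}$, the inner quantity $g(q) := \inf_{Bv = q}(Av,v)$ is positive, finite, and homogeneous of degree two for $q \neq 0$: the feasible set $\{v : Bv = q\}$ is a nonempty (by surjectivity of $B$) closed affine subspace not containing $0$, over which the strictly convex coercive quadratic $(Av,v)$ attains a positive minimum. Hence $\sup_{\|q\|=1} g(q) = \sup_{q \neq 0} g(q)/\|q\|^2$, and inverting together with $\|q\|^2 / \inf_{Bv=q}(Av,v) = \sup_{Bv=q}\|q\|^2/(Av,v)$ recovers the claimed expression for $\lambda_{\min}(S)$. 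For $\lambda_{\max}$, letting $q$ with $\|q\|=1$ and $v$ with $Bv = q$ vary jointly is the same as letting $v$ range over $\{\|Bv\| = 1\}$, so $\inf_{\|q\|=1}\inf_{Bv=q}(Av,v) = \inf_{\|Bv\|=1}(Av,v) = \inf_{Bv \neq 0}(Av,v)/\|Bv\|^2$ by homogeneity; inverting gives $\sup_{Bv \neq 0}\|Bv\|^2/(Av,v)$, and since this ratio vanishes on $\mathcal{N}(B)$, enlarging the supremum to all $0 \neq v \in V$ leaves it unchanged.

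I do not expect a genuine obstacle here: the sole conceptual step is the identity-preconditioner device that converts \cref{Thm:condition_number_aux} into a statement about the bare operator $S$, after which the argument collapses to the elementary homogeneity rewrites above. The only point meriting care is confirming that the inner infima are attained and strictly positive, so that the reciprocals are well defined, and this is exactly where surjectivity of $B$ and positive-definiteness of $A$ are used.
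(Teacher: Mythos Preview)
Your proposal is correct and follows essentially the same approach as the paper: instantiate \cref{Thm:condition_number_aux} with ambient space $W$, auxiliary space $V$, transfer map $\Pi = B$, auxiliary operator $\undertilde{B} = A^{-1}$, and preconditioned operator equal to the identity on $W$, then perform the homogeneity rewrite. The paper's proof is slightly terser on the homogeneity step, but the substance is identical.
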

\begin{proof}
In \cref{Thm:condition_number_aux}, we set
\begin{equation*}
    V \leftarrow W, \quad \undertilde{V} \leftarrow V, \quad
    L \leftarrow I, \quad M \leftarrow S, \quad
    \Pi \leftarrow B, \quad \undertilde{M} \leftarrow A^{-1}.
\end{equation*}
Then we get
\begin{equation*}
    \lambda_{\min} (S)
    = \left( \sup_{0 \neq q \in W} \inf_{v \in V,\ Bv = q} \frac{(Av, v)}{\| q \|^2} \right)^{-1}
    = \inf_{0 \neq q \in W} \sup_{v \in V,\ Bv = q} \frac{\| q \|^2}{(Av, v)}.
\end{equation*}
Moreover, we have
\begin{equation*}
    \lambda_{\max} (S)
    = \left( \inf_{0 \neq q \in W} \inf_{v \in V,\ Bv = q} \frac{(Av, v)}{\| q \|^2} \right)^{-1}
    = \left( \inf_{v \in V,\ Bv \neq 0} \frac{(Av, v)}{\| Bv \|^2 } \right)^{-1}
    = \sup_{0 \neq v \in V} \frac{\| Bv \|^2}{(Av, v)},
\end{equation*}
which completes the proof.
\end{proof}

As a corollary of \cref{Thm:Schur}, we obtain \cref{Cor:Schur}, which appeared in~\cite[Lemma~9.1]{TW:2005} (see also~\cite{BBF:2013}).

\begin{corollary}
\label{Cor:Schur}
In the saddle point problem~\eqref{model_saddle}, suppose that $A$ is SPD.
Then the Schur complement $S$ given in~\eqref{Schur} satisfies
\begin{equation*}
    \lambda_{\min} (S)
    \geq \lambda_{\max} (A)^{-1} \| B^{-1} \|^{-2}, \quad
    \lambda_{\max} (S)
    \leq \lambda_{\min} (A)^{-1} \| B \|^2.
\end{equation*}
\end{corollary}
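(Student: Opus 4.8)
The plan is to derive both inequalities directly from the identities in \cref{Thm:Schur}, using only the Rayleigh-quotient bounds for the SPD operator $A$ together with the operator-norm characterizations of $B$ and $B^{-1}$. No new machinery is needed: the corollary amounts to replacing the exact extremal quotients appearing in \cref{Thm:Schur} by crude but explicit bounds.

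First I would dispose of $\lambda_{\max}(S)$, which is the routine direction. By \cref{Thm:Schur},
\[
\lambda_{\max}(S) = \sup_{0 \neq v \in V} \frac{\|Bv\|^2}{(Av,v)}.
\]
For the numerator I would use the operator-norm bound $\|Bv\|^2 \leq \|B\|^2 \|v\|^2$, and for the denominator the lower Rayleigh bound $(Av,v) \geq \lambda_{\min}(A)\|v\|^2$, which is valid since $A$ is SPD. Dividing and cancelling $\|v\|^2$ gives $\|Bv\|^2/(Av,v) \leq \lambda_{\min}(A)^{-1}\|B\|^2$ for every nonzero $v$, and taking the supremum yields the claimed upper bound.

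The less obvious direction, and essentially the only real step, is the lower bound on $\lambda_{\min}(S)$. Here \cref{Thm:Schur} gives
\[
\lambda_{\min}(S) = \inf_{0 \neq q \in W} \sup_{v \in V,\ Bv = q} \frac{\|q\|^2}{(Av,v)}.
\]
To bound the inner supremum from below I would supply a single admissible test function rather than solve the constrained minimization of $(Av,v)$: for fixed $q$, take $v = B^{-1}q$, where $B^{-1} \colon W \to \mathcal{N}(B)^\perp$ denotes the inverse of $B$ regarded as an isomorphism onto $W$, exactly as in~\eqref{Brezzi}. This $v$ satisfies the constraint $Bv = q$ and obeys $\|v\| \leq \|B^{-1}\|\,\|q\|$. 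Combining this with the upper Rayleigh bound $(Av,v) \leq \lambda_{\max}(A)\|v\|^2$ yields $(Av,v) \leq \lambda_{\max}(A)\|B^{-1}\|^2\|q\|^2$, so the quotient evaluated at this particular $v$ is at least $\lambda_{\max}(A)^{-1}\|B^{-1}\|^{-2}$. Since the supremum dominates this single value and the bound is independent of $q$, taking the infimum over $q$ completes the estimate.

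The main obstacle, such as it is, lies in choosing the correct test vector for the $\lambda_{\min}$ bound and in correctly interpreting $\|B^{-1}\|$ as the norm of the inverse on the orthogonal complement $\mathcal{N}(B)^\perp$, consistent with the inf--sup constant in~\eqref{Brezzi}. Once this identification is in place, both inequalities reduce to elementary Rayleigh-quotient manipulations, which is precisely why the auxiliary space viewpoint makes the argument so short.
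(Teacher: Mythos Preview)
Your proof is correct and follows essentially the same route as the paper: both start from the identities in \cref{Thm:Schur} and reduce to Rayleigh-quotient bounds for $A$ together with the operator-norm characterizations of $B$ and $B^{-1}$. The only cosmetic difference is in the $\lambda_{\min}$ step, where the paper first applies $(Av,v)\le\lambda_{\max}(A)\|v\|^2$ and then evaluates the remaining inf--sup exactly as $\|B^{-1}\|^{-2}$, whereas you bound the inner supremum from below by inserting the specific test vector $v=B^{-1}q$; since this $v$ is precisely the minimizer of $\|v\|$ over the constraint set, the two arguments coincide.
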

\begin{proof}
By \cref{Thm:Schur}, we obtain
\begin{multline*}
    \lambda_{\min} (S) = \inf_{0 \neq q \in W} \sup_{v \in V,\ Bv = q} \frac{\| q \|^2}{(Av, v)}
    \geq \lambda_{\max} (A)^{-1} \inf_{0 \neq q \in W} \sup_{v \in V,\ Bv = q} \frac{\| q \|^2}{\| v \|^2} \\
    = \lambda_{\max} (A)^{-1} \left( \sup_{0 \neq q \in W} \inf_{v \in V,\ Bv = q} \frac{\| v \|^2}{\| q \|^2} \right)^{-1}
    = \lambda_{\max} (A)^{-1} \| B^{-1} \|^{-2},
\end{multline*}
where in the last equality we use the fact that the minimizer $v \in V$ of $\|v\|^2/\|q\|^2$ subject to $Bv=q$ must lie in $\mathcal{N}(B)^{\perp}$.
Meanwhile, we have
\begin{equation*}
    \lambda_{\max} (S) = \sup_{0 \neq v \in V} \frac{\| Bv \|^2}{(Av, v)}
    \leq \lambda_{\min} (A)^{-1} \sup_{0 \neq v \in V} \frac{\| Bv \|^2}{\| v \|^2} = \lambda_{\min} (A)^{-1} \| B \|^2,
\end{equation*}
which completes the proof.
\end{proof}

Sometimes a preconditioner $L \colon W \to W$ is applied to the Schur complement $S$. 
\cref{Thm:Schur_preconditioned} provides sharp estimates for the extremal eigenvalues of the preconditioned operator $LS$.

\begin{theorem}
\label{Thm:Schur_preconditioned}
In the saddle point problem~\eqref{model_saddle}, suppose that $A$ is SPD.
Let $L \colon W \to W$ be an SPD linear operator.
Then the Schur complement $S$ given in~\eqref{Schur} satisfies
\begin{equation*}
    \lambda_{\min} (L S)
    = \inf_{0 \neq q \in W} \sup_{v \in V,\ Bv = q} \frac{(Lq, q)}{(Av, v)}, \quad
    \lambda_{\max} (L S)
    = \sup_{0 \neq v \in V} \frac{(LBv, Bv)}{(Av, v)}.
\end{equation*}
\end{theorem}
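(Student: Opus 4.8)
The plan is to obtain both identities as direct consequences of \cref{Thm:condition_number_aux}, following the proof of \cref{Thm:Schur} almost verbatim and absorbing the preconditioner through a single change in the substitution. The one preliminary point is that the preconditioned operator $LS$ and the operator $SL$ have the same spectrum: since $SL = L^{-1}(LS)L$, they are similar, so it is enough to compute the extremal eigenvalues of $SL$. The advantage of working with $SL$ is that it fits the template $BA$ of \cref{Thm:condition_number_aux}, with the operator $S = BA^{-1}B^t$ playing the role of the auxiliary-space operator $\Pi\undertilde{B}\Pi^t$ and the SPD preconditioner $L$ playing the role of the operator to be preconditioned.

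Concretely, I would apply \cref{Thm:condition_number_aux} with the substitution
\begin{equation*}
    V \leftarrow W, \quad \undertilde{V} \leftarrow V, \quad A \leftarrow L, \quad B \leftarrow S, \quad \Pi \leftarrow B, \quad \undertilde{B} \leftarrow A^{-1},
\end{equation*}
so that the preconditioned operator becomes $BA = SL$ and the auxiliary-space norm $\|\cdot\|_A$ becomes $(L\cdot,\cdot)^{1/2}$. Whereas the proof of \cref{Thm:Schur} placed the identity in this slot and thereby produced the Euclidean quantity $\|q\|^2$, the present choice of $L$ replaces it throughout by $(Lq,q)$. The theorem then delivers
\begin{equation*}
    \lambda_{\min}(SL) = \left( \sup_{0 \neq q \in W} \inf_{v \in V,\ Bv = q} \frac{(Av,v)}{(Lq,q)} \right)^{-1}, \quad
    \lambda_{\max}(SL) = \left( \inf_{0 \neq q \in W} \inf_{v \in V,\ Bv = q} \frac{(Av,v)}{(Lq,q)} \right)^{-1}.
\end{equation*}

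The remaining work is purely to rewrite these two expressions. For $\lambda_{\min}$, taking reciprocals converts the outer supremum over $q$ and inner infimum over $v$ into an infimum of a supremum, which is exactly the claimed identity. For $\lambda_{\max}$, I would collapse the nested infimum over $q$ and $v$ into a single infimum over $v \in V$: the inner constraint forces $q = Bv$, so the condition $(Lq,q) = 1$ becomes $(LBv,Bv) = 1$, precisely the merging of a double infimum used in the proof of \cref{Thm:Schur}; one further reciprocal then yields $\sup_{0 \neq v \in V}(LBv,Bv)/(Av,v)$. I anticipate no genuine obstacle, as the argument is a routine reweighting of \cref{Thm:Schur}; the only items needing explicit verification are the spectral identity $\lambda(LS) = \lambda(SL)$ and the fact that $S = BA^{-1}B^t$ retains the auxiliary-space form under the stated substitution.
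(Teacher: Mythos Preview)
Your proposal is correct and takes essentially the same approach as the paper: the paper's entire proof is the single sentence ``Setting $V \leftarrow W$, $\undertilde{V} \leftarrow V$, $A \leftarrow L$, $B \leftarrow S$, $\Pi \leftarrow B$, $\undertilde{B} \leftarrow A^{-1}$ in \cref{Thm:condition_number_aux} yields the desired result,'' which is exactly your substitution. Your version is in fact slightly more careful, since you make explicit the spectral identity $\lambda(LS)=\lambda(SL)$ (needed because the substitution literally produces $BA\leftarrow SL$), whereas the paper leaves this standard fact implicit.
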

\begin{proof}
Setting
\begin{equation*}
    V \leftarrow W, \quad \undertilde{V} \leftarrow V, \quad
    L \leftarrow L, \quad M \leftarrow S, \quad
    \Pi \leftarrow B, \quad \undertilde{M} \leftarrow A^{-1}
\end{equation*}
in \cref{Thm:condition_number_aux} yields the desired result.
\end{proof}

An interesting special case arises when a right inverse of $B$ is explicitly available. 
In this situation, one can construct a special preconditioner $L$ using the right inverse of $B$ as a building block; examples include certain nonoverlapping domain decomposition methods, which we will discuss in \cref{Sec:FETI}. 
The following corollary, originally stated in~\cite[Lemma~2]{MS:2007} with a rather involved proof, follows directly from \cref{Thm:Schur_preconditioned}.

\begin{corollary}
\label{Cor:Schur_preconditioned}
In the saddle point problem~\eqref{model_saddle}, suppose that $A$ is SPD.
Assume further that a linear operator $\bar{B} \colon V \to W$ satisfies $B \bar{B}^t = I$, and define a preconditioner $L \colon W \to W$ by
\begin{equation*}
    L = \bar{B} A \bar{B}^t.
\end{equation*}
Then the Schur complement $S$ given in~\eqref{Schur} satisfies
\begin{equation*}
    \lambda_{\min} (L S) \geq 1, \quad \lambda_{\max} (L S)
    = \sup_{0 \neq v \in V} \frac{(A \bar{B}^t B v, \bar{B}^t B v)}{(Av, v)}
    = \| \bar{B}^t B \|_A^2,
\end{equation*}
where $\| \cdot \|_A$ denotes the operator norm with respect to the $A$-norm on $V$.
\end{corollary}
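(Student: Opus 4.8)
The plan is to apply \cref{Thm:Schur_preconditioned} with the specific preconditioner $L = \bar{B} A \bar{B}^t$ and then exploit the right-inverse relation $B \bar{B}^t = I$ to evaluate the two resulting variational expressions. \cref{Thm:Schur_preconditioned} immediately supplies the characterizations
\[
\lambda_{\min}(LS) = \inf_{0 \neq q \in W} \sup_{v \in V,\ Bv = q} \frac{(Lq, q)}{(Av, v)}, \qquad
\lambda_{\max}(LS) = \sup_{0 \neq v \in V} \frac{(LBv, Bv)}{(Av, v)},
\]
so the task reduces to simplifying each one using the structure of $L$.

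For $\lambda_{\max}(LS)$, I would substitute $L = \bar{B} A \bar{B}^t$ and move $\bar{B}$ across the inner product through its adjoint, obtaining $(LBv, Bv) = (A \bar{B}^t B v, \bar{B}^t B v)$, which is exactly the first stated identity. To recognize the right-hand side as $\| \bar{B}^t B \|_A^2$, I would recall that the operator norm induced by the $A$-norm satisfies $\| \bar{B}^t B \|_A^2 = \sup_{0 \neq v} \| \bar{B}^t B v \|_A^2 / \| v \|_A^2$ together with $\| w \|_A^2 = (Aw, w)$; the supremum then coincides with the expression above verbatim. This part is purely a matter of unwinding definitions, since no constraint appears in the $\lambda_{\max}$ characterization, so it is an exact identity rather than an inequality.

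For $\lambda_{\min}(LS)$, the key idea is to supply, for each fixed $q \neq 0$, a single admissible competitor in the inner supremum that already attains the value $1$. The natural choice is the lift $v = \bar{B}^t q$, which is feasible because $Bv = B \bar{B}^t q = q$ by hypothesis. For this choice one computes $(Av, v) = (A \bar{B}^t q, \bar{B}^t q) = (\bar{B} A \bar{B}^t q, q) = (Lq, q)$, so the ratio $(Lq, q) / (Av, v)$ equals exactly $1$. Hence the inner supremum is at least $1$ for every $q$, and taking the infimum over $q$ yields $\lambda_{\min}(LS) \geq 1$.

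The only nonroutine step is identifying the correct test function $v = \bar{B}^t q$ for the lower bound; once the right inverse $\bar{B}^t$ is recognized as the canonical lift from $W$ into $V$ that realizes the constraint $Bv = q$, the remaining manipulations are immediate algebra. I do not expect any genuine obstacle here, as the auxiliary space framework has already reduced the eigenvalue bounds to the two constrained Rayleigh-quotient problems above.
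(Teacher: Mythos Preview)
Your proposal is correct and follows essentially the same approach as the paper: the paper's proof likewise invokes \cref{Thm:Schur_preconditioned}, substitutes the test function $v = \bar{B}^t q$ to obtain the lower bound $\lambda_{\min}(LS)\ge 1$, and records the maximum eigenvalue identity as an immediate consequence. Your write-up simply spells out the algebra that the paper leaves implicit.
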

\begin{proof}
The estimate for the minimum eigenvalue follows by substituting $v = \bar{B}^t q$ into the expression in \cref{Thm:Schur_preconditioned}, 
while the estimate for the maximum eigenvalue is an immediate consequence of \cref{Thm:Schur_preconditioned}.
\end{proof}

Next, we consider the case when $A$ is semi-SPD. 
Recall that the projected Schur complement $S_0$ given in~\eqref{Schur_singular} satisfies
\begin{equation*}
    S_0 = PB A^+ (PB)^t.
\end{equation*}
As discussed in the proof of \cref{Prop:Schur_singular}, the operator $PB|_{\mathcal{R}(A)}$ is surjective and $\mathcal{R}((PB)^t) \subset \mathcal{R}(A)$. 
Hence, in view of \cref{Prop:Schur_singular}, we may regard $A^+$ and $PB$ as
\begin{equation}
\label{domain_restriction}
    A^+ \colon \mathcal{R}(A) \to \mathcal{R}(A), 
    \quad
    PB \colon \mathcal{R}(A) \to W_0,
\end{equation}
where $W_0$ was defined in~\eqref{W0}.
This observation shows that the analysis of the semi-SPD case proceeds in essentially the same way as in the SPD case; see \cref{Thm:Schur_singular,Thm:Schur_singular_preconditioned}.
Consequently, \cref{Thm:Schur_singular,Thm:Schur_singular_preconditioned} do not depend on a particular choice of the pseudoinverse $A^+$ on the whole space $V$, but only on the canonical inverse of $A$ on $\mathcal{R}(A)$.

\begin{theorem}
\label{Thm:Schur_singular}
In the saddle point problem~\eqref{model_saddle}, suppose that $A$ is semi-SPD.
Then the projected Schur complement $S_0$ given in~\eqref{Schur_singular} satisfies
\begin{equation*}
    \lambda_{\min} (S_0)
    = \inf_{0 \neq q \in W_0} \sup_{v \in \mathcal{R}(A),\ PBv = q} \frac{\| q \|^2}{(Av, v)}, \quad
    \lambda_{\max} (S_0)
    = \sup_{0 \neq v \in \mathcal{R} (A)} \frac{\| PBv \|^2}{(Av, v)}.
\end{equation*}
\end{theorem}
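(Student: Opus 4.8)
The plan is to mimic the proof of \cref{Thm:Schur}, applying \cref{Thm:condition_number_aux} on the subspace $\mathcal{R}(A)$ rather than on all of $V$. As noted in the discussion preceding the theorem, the restricted operator $A|_{\mathcal{R}(A)} \colon \mathcal{R}(A) \to \mathcal{R}(A)$ is SPD, its inverse is $A^+|_{\mathcal{R}(A)}$, and $PB|_{\mathcal{R}(A)} \colon \mathcal{R}(A) \to W_0$ is surjective. Since $\mathcal{R}((PB)^t) \subset \mathcal{R}(A)$, the adjoint of the restricted map $PB|_{\mathcal{R}(A)}$ coincides with $(PB)^t$ regarded as a map $W_0 \to \mathcal{R}(A)$, so that $S_0 = (PB|_{\mathcal{R}(A)})\,(A^+|_{\mathcal{R}(A)})\,(PB|_{\mathcal{R}(A)})^t$ has exactly the auxiliary-space form~\eqref{B_aux} on the pair $(W_0, \mathcal{R}(A))$.

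First I would make the substitutions $V \leftarrow W_0$, $\undertilde{V} \leftarrow \mathcal{R}(A)$, $A \leftarrow I$ (the identity on $W_0$), $B \leftarrow S_0$, $\Pi \leftarrow PB|_{\mathcal{R}(A)}$, and $\undertilde{B} \leftarrow A^+|_{\mathcal{R}(A)}$ into \cref{Thm:condition_number_aux}. The hypotheses are met by the preceding observations: $\undertilde{B}$ is SPD on $\mathcal{R}(A)$ and $\Pi$ is surjective onto $W_0$. Because $A \leftarrow I$ on $W_0$, the energy norm $\|q\|_A$ reduces to the Euclidean norm $\|q\|$, and $(\undertilde{B}^{-1} v, v) = (Av, v)$ for $v \in \mathcal{R}(A)$.

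Then the formula for $\lambda_{\min}$ in \cref{Thm:condition_number_aux} yields directly the claimed expression for $\lambda_{\min}(S_0)$, after rewriting the resulting sup--inf as an inf--sup. For $\lambda_{\max}$, the double infimum over $0 \neq q \in W_0$ and over $v \in \mathcal{R}(A)$ with $PBv = q$ collapses to a single infimum over $0 \neq v \in \mathcal{R}(A)$, giving $\lambda_{\max}(S_0) = \big( \inf_{0 \neq v \in \mathcal{R}(A)} (Av,v)/\|PBv\|^2 \big)^{-1} = \sup_{0 \neq v \in \mathcal{R}(A)} \|PBv\|^2/(Av,v)$, exactly as in the SPD computation.

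The main obstacle I anticipate is bookkeeping rather than conceptual: one must confirm that restricting every operator to $\mathcal{R}(A)$ preserves the auxiliary-space structure, in particular that the adjoint of $PB|_{\mathcal{R}(A)}$ is the appropriate restriction of $(PB)^t$. This is precisely where the inclusion $\mathcal{R}((PB)^t) \subset \mathcal{R}(A)$---equivalently the identity $PB|_{\mathcal{N}(A)} = 0$ established in \cref{Prop:Schur_singular}---is used, since it guarantees that no component of $(PB)^t q$ is lost when one regards the codomain as $\mathcal{R}(A)$. Once this identification is in place, the two eigenvalue formulas follow from \cref{Thm:condition_number_aux} with no further argument.
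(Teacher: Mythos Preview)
Your proposal is correct and follows essentially the same approach as the paper: the paper states \cref{Thm:Schur_singular} without an explicit proof, relying on the discussion immediately preceding it (regarding $A^+$ and $PB$ as operators $\mathcal{R}(A)\to\mathcal{R}(A)$ and $\mathcal{R}(A)\to W_0$) together with the proof of \cref{Thm:Schur_singular_preconditioned}, which amounts to exactly the substitution you give with $L\leftarrow I$. Your added bookkeeping about the adjoint of $PB|_{\mathcal{R}(A)}$ via $PB|_{\mathcal{N}(A)}=0$ is precisely the content of that discussion and \cref{Prop:Schur_singular}.
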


\begin{theorem}
\label{Thm:Schur_singular_preconditioned}
In the saddle point problem~\eqref{model_saddle}, suppose that $A$ is semi-SPD.
Let $L \colon W_0 \to W_0$ be an SPD linear operator.
Then the projected Schur complement $S_0$ given in~\eqref{Schur_singular} satisfies
\begin{equation*}
    \lambda_{\min} (L S_0)
    = \inf_{0 \neq q \in W_0} \sup_{v \in \mathcal{R}(A),\ PBv = q} \frac{(Lq, q)}{(Av, v)}, \quad
    \lambda_{\max} (L S_0)
    = \sup_{0 \neq v \in \mathcal{R} (A)} \frac{(LPBv, PBv)}{(Av, v)}.
\end{equation*}
\end{theorem}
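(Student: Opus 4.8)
The plan is to mirror the proof of \cref{Thm:Schur_preconditioned}, applying \cref{Thm:condition_number_aux} after restricting every operator to the subspace on which it is genuinely definite. The key structural facts have already been isolated in \cref{Prop:Schur_singular} and in the discussion immediately preceding the theorem: the projected Schur complement factors as $S_0 = (PB) A^+ (PB)^t$, and once we regard $A^+ \colon \mathcal{R}(A) \to \mathcal{R}(A)$ and $PB \colon \mathcal{R}(A) \to W_0$, the operator $A^+|_{\mathcal{R}(A)}$ is the genuine inverse of the SPD operator $A|_{\mathcal{R}(A)}$ (recall $\mathcal{R}(A) = \mathcal{N}(A)^{\perp}$), hence SPD, while $PB|_{\mathcal{R}(A)}$ is surjective onto $W_0$. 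The goal is therefore to recognize $S_0$ as an operator of the form $\Pi \undertilde{B} \Pi^t$ on the pair of spaces $(W_0, \mathcal{R}(A))$.

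First I would confirm that the adjoint of $\Pi := PB|_{\mathcal{R}(A)} \colon \mathcal{R}(A) \to W_0$ coincides with $(PB)^t$ viewed as a map $W_0 \to \mathcal{R}(A)$. This reduces to the inclusion $\mathcal{R}((PB)^t) \subset \mathcal{R}(A)$, which follows from $PB|_{\mathcal{N}(A)} = 0$: for $q \in W_0$ and $n \in \mathcal{N}(A)$ one has $((PB)^t q, n) = (q, PB n) = 0$, so $(PB)^t q \perp \mathcal{N}(A)$. With this identification, the factorization $S_0 = (PB) A^+ (PB)^t$ is exactly $\Pi \undertilde{B} \Pi^t$ with $\undertilde{B} = A^+|_{\mathcal{R}(A)}$ SPD and $\Pi$ surjective, so \cref{Thm:condition_number_aux} applies.

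Next I would invoke \cref{Thm:condition_number_aux} under the substitution
\[
V \leftarrow W_0, \quad \undertilde{V} \leftarrow \mathcal{R}(A), \quad A \leftarrow L, \quad B \leftarrow S_0, \quad \Pi \leftarrow PB|_{\mathcal{R}(A)}, \quad \undertilde{B} \leftarrow A^+|_{\mathcal{R}(A)}.
\]
Here $(\undertilde{B}^{-1} \undertilde{v}, \undertilde{v}) = (Av, v)$ for $v \in \mathcal{R}(A)$ and $\|q\|_A^2 = (Lq, q)$ for the new $A = L$. For $\lambda_{\min}(L S_0)$ the theorem gives a supremum over $q$ with $(Lq,q)=1$ of an inner infimum over admissible $v$; homogenizing the constraint into a ratio and reciprocating (which swaps the outer $\sup$ and inner $\inf$) yields the stated identity. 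For $\lambda_{\max}(L S_0)$ the same substitution produces a double infimum over $q \in W_0$ and over $v \in \mathcal{R}(A)$ with $PBv = q$; collapsing these by setting $q = PBv$ and reciprocating gives the supremum of $(LPBv, PBv)/(Av,v)$. The one point requiring care is extending this supremum to all $0 \neq v \in \mathcal{R}(A)$: vectors with $PBv = 0$ contribute a vanishing Rayleigh quotient and hence do not affect the supremum.

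I do not expect a serious obstacle, as the nontrivial structural content has already been established. The main task is the bookkeeping of ensuring that restriction to $\mathcal{R}(A)$ and $W_0$ legitimately casts $S_0$ in auxiliary-space form; once that is in place, the two identities follow from \cref{Thm:condition_number_aux} by the same elementary manipulations used in \cref{Thm:Schur,Thm:Schur_preconditioned}.
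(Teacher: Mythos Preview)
Your proposal is correct and follows the same approach as the paper: apply \cref{Thm:condition_number_aux} with the substitution $V \leftarrow W_0$, $\undertilde{V} \leftarrow \mathcal{R}(A)$, $A \leftarrow L$, $B \leftarrow S_0$, $\Pi \leftarrow PB$, $\undertilde{B} \leftarrow A^+$. The paper's own proof is a single sentence recording this substitution, so your version is in fact more detailed in checking that the restriction to $\mathcal{R}(A)$ and $W_0$ legitimately places $S_0$ in auxiliary-space form.
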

\begin{proof}
Setting
\begin{equation*}
    V \leftarrow W_0, \quad \undertilde{V} \leftarrow \mathcal{R}(A), \quad
    L \leftarrow L, \quad M \leftarrow S_0, \quad
    \Pi \leftarrow PB, \quad \undertilde{M} \leftarrow A^+
\end{equation*}
in \cref{Thm:condition_number_aux} yields the desired result.
\end{proof}

For completeness, we also provide a counterpart of \cref{Cor:Schur_preconditioned} for the semi-SPD case in \cref{Cor:Schur_singular_preconditioned}.

\begin{corollary}
\label{Cor:Schur_singular_preconditioned}
In the saddle point problem~\eqref{model_saddle}, suppose that $A$ is semi-SPD.
Assume further that a linear operator $\bar{B} \colon V \to W$ satisfies $B \bar{B}^t = I$, and define a preconditioner $L \colon W_0 \to W_0$ by
\begin{equation*}
    L = (P \bar{B}) A (P \bar{B})^t.
\end{equation*}
Then the projected Schur complement $S_0$ given in~\eqref{Schur_singular} satisfies
\begin{equation*}
    \lambda_{\min} (L S_0) \geq 1, \quad
    \lambda_{\max} (L S_0)
    = \sup_{0 \neq v \in \mathcal{R} (A)} \frac{(A (P \bar{B})^t P B v, (P \bar{B})^t P B v)}{(Av, v)}.
\end{equation*}
\end{corollary}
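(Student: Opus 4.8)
The plan is to mirror the structure of the proof of \cref{Cor:Schur_preconditioned}, applying \cref{Thm:Schur_singular_preconditioned} to the specific preconditioner $L = (P\bar{B}) A (P\bar{B})^t$. First I would verify that $L$ is indeed a well-defined SPD operator on $W_0$, so that \cref{Thm:Schur_singular_preconditioned} is applicable: since $\mathcal{R}((P\bar{B})^t) \subset \mathcal{R}(PB) = W_0$ and, crucially, $P\bar{B}$ must be surjective onto $W_0$ for $L$ to be SPD via \cref{Lem:aux_space_lemma}. The surjectivity of $P\bar{B}$ follows because $P$ maps onto $W_0$ and $\bar{B}$ is a right inverse of $B$ (so $\bar{B}$ has full column rank matching $W$), combined with the fact that $PB|_{\mathcal{R}(A)}$ is already known to be surjective onto $W_0$ from the discussion preceding \cref{Thm:Schur_singular}.

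For the maximum eigenvalue, I would simply substitute the definition of $L$ into the expression from \cref{Thm:Schur_singular_preconditioned}, computing $(LPBv, PBv) = (A(P\bar{B})^t P B v, \ldots)$. The delicate point here is that the supremum runs over $v \in \mathcal{R}(A)$, and one must track that $PBv$ interacts correctly with $(P\bar{B})^t$; expanding $(LPBv, PBv) = ((P\bar{B})A(P\bar{B})^t PBv, PBv) = (A(P\bar{B})^t PBv, (P\bar{B})^t PBv)$ by self-adjointness gives the stated formula, provided the composition $(P\bar{B})^t P B$ is the operator appearing in the claim. I would need to confirm that $(P\bar{B})^t P = (P\bar{B})^t$ as operators acting in the relevant expression, which holds because $P$ is a projection and $PB v \in W_0$.

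For the minimum eigenvalue bound $\lambda_{\min}(LS_0) \geq 1$, I would exhibit, for each $0 \neq q \in W_0$, a specific competitor $v$ in the inner supremum of \cref{Thm:Schur_singular_preconditioned} that achieves the value $1$. The natural choice is $v = A^+ (P\bar{B})^t q$ adapted to the constraint $PBv = q$; by analogy with \cref{Cor:Schur_preconditioned}, where $v = \bar{B}^t q$ works, here one wants $v$ such that $PBv = q$ while $(Av,v) = (Lq,q)$. The main obstacle will be selecting a valid competitor lying in $\mathcal{R}(A)$ that simultaneously satisfies the constraint $PBv = q$ and yields $(Av,v) \leq (Lq,q)$; unlike the SPD case, one cannot directly set $v = (P\bar{B})^t q$ since this need not satisfy $PB v = q$ nor lie in $\mathcal{R}(A)$. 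I expect the resolution to require projecting $(P\bar{B})^t q$ onto $\mathcal{R}(A)$ or using the pseudoinverse structure, and verifying that the constraint $PBv = q$ is preserved—this interplay between the projection $P$, the range restriction to $\mathcal{R}(A)$, and the right-inverse property $B\bar{B}^t = I$ is the crux of the argument and the step most likely to require careful bookkeeping.
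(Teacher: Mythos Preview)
Your approach is correct and matches the paper's intended argument; the corollary is stated there without proof as the direct semi-SPD counterpart of \cref{Cor:Schur_preconditioned}, proved by plugging into \cref{Thm:Schur_singular_preconditioned}. Two small clarifications sharpen your outline. First, you cannot invoke \cref{Lem:aux_space_lemma} to certify that $L$ is SPD, since that lemma requires the inner operator (here $A$) to be SPD; instead argue directly that $(Lq,q) = (A\bar{B}^t q, \bar{B}^t q) = 0$ forces $\bar{B}^t q \in \mathcal{N}(A)$, whence $q = B\bar{B}^t q \in \mathcal{R}(B|_{\mathcal{N}(A)}) = W_0^{\perp}$, so $q \in W_0 \cap W_0^{\perp} = \{0\}$. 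Second, your anticipated resolution for the minimum eigenvalue is exactly right and needs no pseudoinverse: decompose $\bar{B}^t q = v + n$ with $v \in \mathcal{R}(A)$ and $n \in \mathcal{N}(A)$; then $Bn \in \mathcal{R}(B|_{\mathcal{N}(A)}) = W_0^{\perp}$ gives $PBn = 0$, so $PBv = PB\bar{B}^t q = Pq = q$, while $(Av,v) = (A\bar{B}^t q, \bar{B}^t q) = (Lq,q)$, yielding ratio $1$.
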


\section{Application I: Augmented Lagrangian method}
\label{Sec:ALM} 
In the remainder of this paper, we present several examples illustrating how the auxiliary space theory and the estimates for the Schur complements developed in \cref{Sec:Schur} can be applied to different problems.
As a first example, we consider the augmented Lagrangian method~\cite{FG:1983}, which is a stationary iterative method for solving the saddle point problem~\eqref{model_saddle}.

In the augmented Lagrangian method, given the current iterate $(u^{m}, p^{m})$, the next iterate $(u^{m+1}, p^{m+1})$ is obtained as
\begin{equation}
\label{ALM}
\begin{aligned}
u^{m+1} &= (A + \epsilon^{-1} B^t B)^{-1} (f + \epsilon^{-1} B^t g - B^t p^{m}), \\
p^{m+1} &= p^{m} - \epsilon^{-1}(g - Bu^{m+1}),
\end{aligned}
\quad m \geq 0.
\end{equation}
It is well-known (see, e.g.,~\cite{BGL:2005}) that the augmented Lagrangian method~\eqref{ALM} is equivalent to the Richardson iteration with step size $\epsilon^{-1}$ applied to the linear system
\begin{equation}
\label{Schur_augmented}
S_{\epsilon} p = d_{\epsilon},
\text{ where }
S_{\epsilon} = B (A + \epsilon^{-1} B^t B)^{-1} B^t, \,
d_{\epsilon} = B (A + \epsilon^{-1} B^t B)^{-1} (f + \epsilon^{-1} B^t g) - g.
\end{equation}
Therefore, the analysis of the augmented Lagrangian method reduces to the analysis of the augmented Schur complement $S_{\epsilon}$.

In the following, we derive some identities for $S_{\epsilon}$, which can be regarded as refinements of existing results~\cite{LP:2017,LWXZ:2007}, by invoking the auxiliary space theory. 
By repeated applications of \cref{Lem:aux_space_lemma}, for any $q \in W$ we have
\begin{multline*}
(S_{\epsilon}^{-1} q, q)
= \inf_{v \in V,\ Bv = q} ((A + \epsilon^{-1}B^t B)v, v) \\
= \inf_{v \in V,\ Bv = q} (Av, v) + \epsilon^{-1}(q,q) 
= (S^{-1}q, q) + \epsilon^{-1}(q,q).
\end{multline*}
Thus we obtain the identity (cf.~\cite[equation~(3.4)]{LP:2017}):
\begin{equation}
\label{S_epsilon}
S_{\epsilon}^{-1} = S^{-1} + \epsilon^{-1} I.
\end{equation}
Consequently, the extremal eigenvalues of $S_{\epsilon}$ are
\begin{equation}
\label{Schur_augmented_eigenvalues}
\lambda_{\min}(S_{\epsilon}) = \frac{\epsilon \lambda_{\min}(S)}{\epsilon + \lambda_{\min}(S)}, \quad
\lambda_{\max}(S_{\epsilon}) = \frac{\epsilon \lambda_{\max}(S)}{\epsilon + \lambda_{\max}(S)}.
\end{equation}

In the augmented Lagrangian method~\eqref{ALM}, the error propagation operator is given by $I  - \epsilon^{-1} S_{\epsilon}$.
Using~\eqref{Schur_augmented_eigenvalues}, we have the following sharp estimate:
\begin{equation*}
\| I - \epsilon^{-1} S_{\epsilon} \|
= \lambda_{\max} ( I - \epsilon^{-1} S_{\epsilon})
= 1 - \epsilon^{-1} \lambda_{\min} (S_{\epsilon})
= \frac{\epsilon}{\epsilon + \lambda_{\min} (S)}.
\end{equation*}
Namely, the convergence rate of~\eqref{ALM} becomes arbitrarily fast as $\epsilon$ becomes small.
This result can be regarded as a refined version of~\cite[Lemma~2.1]{LWXZ:2007}, with much simpler proof.
We note that such arbitrarily fast convergence of the augmented Lagrangian method can be extended to a broader class of convex optimization problems; see~\cite[Appendix~A]{PH:2025}.

On the other hand, if we apply the conjugate gradient method to solve~\eqref{Schur_augmented}, then the convergence rate depends on the condition number $\kappa(S_{\epsilon})$, which satisfies
\begin{equation*}
\kappa(S_{\epsilon}) 
= \frac{\lambda_{\max}(S)}{\lambda_{\min}(S)} 
  \frac{\epsilon + \lambda_{\min}(S)}{\epsilon + \lambda_{\max}(S)}
\rightarrow 1 
\quad \text{as} \quad \epsilon \to 0.
\end{equation*}
This result both refines and agrees with~\cite{LP:2009,LP:2017}, where the condition number of the augmented Lagrangian method applied to nonoverlapping domain decomposition methods was analyzed.

\begin{remark}
\label{Rem:S_epsilon}
Alternative derivations of the identity~\eqref{S_epsilon} can be found in the literature.
For example, the well-known Sherman--Morrison--Woodbury formula~\cite[equation~(2.1.4)]{GV:2013},
\begin{equation*}
    (A + UCV)^{-1} = A^{-1} - A^{-1} U (C^{-1} + V A^{-1} U)^{-1} V A^{-1},
\end{equation*}
where $A$, $C$, $U$, and $V$ are conformable matrices, can be used to derive~\eqref{S_epsilon}; see, for example,~\cite[Lemma~4.1]{BO:2006} and~\cite[equation~(3.4)]{LP:2017}.
\end{remark}

\section{Application II: Mixed finite element methods}
\label{Sec:Mixed}
In this section, we present an analysis of several mixed finite element methods for solving mixed variational problems~\cite{BBF:2013}. 
As representative examples, we consider Darcy flow~\cite{BOS:2018,PH:2025} and the Stokes equations~\cite{ABMXZ:2014,Verfurth:1984}, which have broad applications in fluid mechanics.

\subsection{Darcy flow}
Let $\Omega$ be a bounded polyhedral domain in $\mathbb{R}^d$ ($d=2,3$). 
The linear relationship between the Darcy velocity $u$ and pressure $p$, together with the conservation of mass and the homogeneous Dirichlet boundary condition, is given by
\begin{equation}
\label{Darcy_strong}
\begin{aligned}
u + \nabla p &= 0 \quad \text{in } \Omega, \\
\operatorname{div} u &= b \quad \text{in } \Omega, \\
p &= 0 \quad \text{on } \partial \Omega,
\end{aligned}
\end{equation}
where $b$ is a prescribed source or sink term. 
Problems of the same form also arise in the mixed formulation of the Poisson equation, as well as in the Hellinger--Reissner variational principle for linear elasticity.

The weak formulation of~\eqref{Darcy_strong} defined on $H(\operatorname{div}; \Omega) \times L^2(\Omega)$ reads as follows:  
find $u \in H(\operatorname{div}; \Omega)$ and $p \in L^2(\Omega)$ such that
\begin{equation}
\label{Darcy_weak}
\begin{aligned}
\int_{\Omega} u \cdot v \,dx - \int_{\Omega} p \,\operatorname{div} v \,dx &= 0, \\
- \int_{\Omega} q \,\operatorname{div} u \,dx &= - \int_{\Omega} b q \,dx,
\end{aligned}
\quad v \in H(\operatorname{div}; \Omega),\ q \in L^2(\Omega).
\end{equation}
A mixed finite element method for solving~\eqref{Darcy_weak} is obtained by replacing $H(\operatorname{div}; \Omega)$ and $L^2(\Omega)$ with suitable finite element spaces $V$ and $W$. 
The corresponding algebraic system is
\begin{equation}
\label{Darcy_FEM}
\begin{bmatrix} M_V & B^t \\ B & 0 \end{bmatrix} 
\begin{bmatrix} u \\ p \end{bmatrix}
= \begin{bmatrix} 0 \\ g \end{bmatrix},
\end{equation}
where the matrices $M_V$, $B$, and the vector $g$ are defined by
\begin{equation*}
\resizebox{\textwidth}{!}{$\displaystyle
(M_V v, w) = \int_{\Omega} v \cdot w \,dx,\
(Bv, q) = - \int_{\Omega} q \,\operatorname{div} v \,dx,\
(g, q) = - \int_{\Omega} b q \,dx,
\quad v, w \in V,\ q \in W,
$}
\end{equation*}
and $(\cdot,\cdot)$ denotes the Euclidean inner product. 
Thus, the problem~\eqref{Darcy_FEM} is an instance of~\eqref{model_saddle}.

It is straightforward to verify that
\begin{equation}
\label{discrete_divergence}
    B = - M_W \operatorname{div},
\end{equation}
where $M_W \colon W \to W$ is the mass matrix on $W$ defined analogously to $M_V$. 
Moreover, the mass matrices $M_V$ and $M_W$ satisfy the scaling relations~(cf.~\cite[Section~3.4]{TW:2005})
\begin{equation}
\label{mass_matrix}
    (M_V v, v) \eqsim h^d \| v \|^2, \quad
    (M_W q, q) \eqsim h^d \| q \|^2, 
    \quad v \in V,\ q \in W,
\end{equation}
where $h > 0$ denotes the characteristic element diameter.

We assume that the finite element spaces $V$ and $W$ satisfy the discrete Babu\v{s}ka--Brezzi condition (see, e.g.,~\cite{BOS:2018}), namely,
\begin{equation*}
    \inf_{q \in W} \sup_{v \in V} \frac{(\operatorname{div} v, q)_{L^2}}{\| v \|_{H(\operatorname{div})} \| q \|_{L^2}} \gtrsim 1.
\end{equation*}
By~\eqref{Brezzi}, this condition is equivalent to the statement that for any $q \in W$, there exists $v \in V$ such that
\begin{equation}
    \label{Brezzi_Darcy}
    \operatorname{div} v = q, 
    \quad \| q \|_{L^2} \gtrsim \| v \|_{H(\operatorname{div})}.
\end{equation}

Now, we analyze the Schur complement $S = B M_V^{-1} B^t$ corresponding to~\eqref{Darcy_FEM}.
By \cref{Thm:Schur}, the minimum eigenvalue of $S$ is estimated as follows:
\begin{multline*}
\lambda_{\min} (S) = \inf_{0 \neq q \in W} \sup_{v \in V,\ Bv = q} \frac{\| q \|^2}{(M_V v, v)}
\stackrel{\eqref{discrete_divergence}}{=} \inf_{0 \neq q \in W} \sup_{v \in V, \operatorname{div} v = q} \frac{\| M_W q \|^2}{(M_V v, v)} \\
\stackrel{\eqref{mass_matrix}}{\eqsim} h^{d} \inf_{0 \neq q \in W} \sup_{v \in V, \operatorname{div} v = q} \frac{(M_W q, q)}{(M_V v, v)}
\stackrel{\eqref{Darcy_FEM}}{=} h^{d} \inf_{0 \neq q \in W} \sup_{v \in V, \operatorname{div} v = q} \frac{\| q \|_{L^2}^2}{\| v \|_{L^2}^2}
\stackrel{\eqref{Brezzi_Darcy}}{\gtrsim} h^{d}.
\end{multline*}
For the maximum eigenvalue, we have
\begin{equation*}
\lambda_{\max} (S)
= \sup_{0 \neq v \in V} \frac{\| B v \|^2}{(M_V v, v)}
\stackrel{\eqref{discrete_divergence}}{=} \sup_{0 \neq v \in V} \frac{\| M_W \operatorname{div} v \|^2}{(M_V v, v)}
\stackrel{\eqref{mass_matrix}}{\eqsim} h^d \sup_{0 \neq v \in V} \frac{\| \operatorname{div} v \|_{L^2}^2}{\| v \|_{L^2}^2}
\lesssim h^{d-2},
\end{equation*}
where the last inequality follows from the standard inverse inequality~(see, e.g.,~\cite[Lemma~2.2]{Oh:2013}).
In conclusion, we obtain
\begin{equation*}
    \kappa (S) \lesssim h^{-2}.
\end{equation*}

Since $\kappa(S)$ grows as the element diameter $h$ decreases, a preconditioner is required. 
As discussed in~\cite{Xu:2010}, an optimal preconditioner for $S$ can be constructed by exploiting its spectral equivalence with a certain discretization of the Poisson problem. 
Indeed, a preconditioner based on the spectral equivalence with the Poisson problem discretized by an interior penalty method was considered in~\cite{RVW:1996}.

\subsection{Stokes equations}
Let $\Omega$ be a bounded polyhedral domain in $\mathbb{R}^d$ ($d=2,3$). 
The incompressible Stokes equations with the homogeneous Dirichlet boundary condition are written as
\begin{equation}
\label{Stokes_strong}
\begin{aligned}
- \Delta u + \nabla p &= f \quad &&\text{in } \Omega, \\
\operatorname{div} u &= 0 \quad &&\text{in } \Omega, \\
u &= 0 \quad &&\text{on } \partial \Omega,
\end{aligned}
\end{equation}
where $u$ denotes the velocity, $p$ the pressure, and $f$ the external body force density.

The weak formulation of~\eqref{Stokes_strong} on $H_0^1(\Omega)^d$ and $L_0^2(\Omega)$ is given as follows: find $(u,p) \in H_0^1(\Omega)^d \times L_0^2(\Omega)$ such that
\begin{equation}
\label{Stokes_weak}
\begin{aligned}
\int_{\Omega} \nabla u \cdot \nabla v \,dx - \int_{\Omega} p \,\operatorname{div} v \,dx &= \int_{\Omega} f \cdot v \,dx, \\
- \int_{\Omega} q \,\operatorname{div} u \,dx &= 0,
\end{aligned}
\quad v \in H_0^1(\Omega)^d,\ q \in L_0^2(\Omega).
\end{equation}
A mixed finite element method for solving~\eqref{Stokes_strong} is obtained by replacing $H_0^1(\Omega)^d$ and $L_0^2(\Omega)$ with suitable finite element spaces $V$ and $W$.
Then the corresponding algebraic system becomes~\eqref{model_saddle}, where the matrices $A$, $B$ and the vector $f$ are defined by
\begin{equation}
\label{Stokes_FEM}
\resizebox{\textwidth}{!}{$\displaystyle
(A v, w) = \int_{\Omega} \nabla v \cdot \nabla w\,dx,\
(Bv, q) = - \int_{\Omega} q \operatorname{div} v \,dx,\ 
(f, v) = \int_{\Omega} f \cdot v \,dx, \quad
v,w \in V,\ q \in W. 
$}
\end{equation}

Let $M_W \colon W \to W$ be the mass matrix on $W$. 
Then, similarly to the case of Darcy flow, we observe that~\eqref{discrete_divergence} and~\eqref{mass_matrix} hold. 
Moreover, if we assume that the finite element spaces $V$ and $W$ satisfy the discrete Babu\v{s}ka--Brezzi condition (see~\cite{BBF:2013}),
\begin{equation*}
\inf_{q \in W} \sup_{v \in V} \frac{(\operatorname{div} v, q)_{L^2}}{\| v \|_{H^1} \|q \|_{L^2}} \gtrsim 1,
\end{equation*}
then by~\eqref{Brezzi}, this condition is equivalent to the statement that for any $q \in W$, there exists $v \in V$ such that
\begin{equation}
\label{Brezzi_Stokes}
\operatorname{div} v = q, \quad \| q \|_{L^2} \gtrsim \| v \|_{H^1}.
\end{equation}

Now, we estimate the extremal eigenvalues of the Schur complement $S = BA^{-1} B^t$.
By \cref{Thm:Schur}, the minimum eigenvalue of $S$ is estimated as follows:
\begin{multline*}
    \lambda_{\min} (S)
    = \inf_{0 \neq q \in W} \sup_{v \in V,\ Bv = q} \frac{\| q \|^2}{(Av, v)}
    \stackrel{\eqref{discrete_divergence}}{=} \inf_{0 \neq q \in W} \sup_{v \in V,\ \operatorname{div} v = q} \frac{\| M_W q \|^2}{(Av, v)} \\
    \stackrel{\eqref{mass_matrix}}{\eqsim} h^{d} \inf_{0 \neq q \in W} \sup_{v \in V,\ \operatorname{div} v = q} \frac{(M_W q, q)}{(Av, v)}
    \stackrel{\eqref{Stokes_FEM}}{=} h^{d} \inf_{0 \neq q \in W} \sup_{v \in V,\ \operatorname{div} v = q} \frac{\| q \|_{L^2}^2}{| v |_{H^1}^2} \stackrel{\eqref{Brezzi_Stokes}}{\gtrsim} h^{d}.
\end{multline*}
On the other hand, the maximum eigenvalue of $S$ is estimated as follows:
\begin{multline*}
    \lambda_{\max} (S)
    = \sup_{0 \neq v \in V} \frac{\| Bv \|^2}{(Av, v)}
    \stackrel{\eqref{discrete_divergence}}{=} \sup_{0 \neq v \in V} \frac{\| M_W \operatorname{div} v \|^2}{(Av, v)} \\
    \stackrel{\eqref{mass_matrix}}{\eqsim} h^d \sup_{0 \neq v \in V} \frac{(M_W \operatorname{div} v, \operatorname{div} v)}{(Av, v)}
    \stackrel{\eqref{Stokes_FEM}}{=} h^d \sup_{0 \neq v \in V} \frac{\| \operatorname{div} v \|_{L^2}^2}{| v |_{H^1}^2}
    \lesssim h^d.
\end{multline*}
Consequently, we obtain
\begin{equation*}
    \kappa (S) \lesssim 1.
\end{equation*}
This implies that the conjugate gradient method for solving the dual problem converges uniformly with respect to the characteristic element diameter $h$, which agrees with~\cite{Verfurth:1984}.

\section{Application III: FETI and FETI-DP}
\label{Sec:FETI}
In this section, we present applications of the proposed framework to two widely used nonoverlapping domain decomposition methods~\cite{TW:2005,XZ:1998}: 
FETI~\cite{FR:1991} and FETI-DP~\cite{FLLPR:2001,KWD:2002}. 
Nonoverlapping domain decomposition methods have been actively studied for several decades. 
Although these methods are highly efficient in practice, their mathematical analysis is often rather involved. 
We show how the abstract estimates for the Schur complements developed in \cref{Sec:Schur} can lead to much simpler analyses of these methods.

\subsection{FETI}
We first consider FETI, originally proposed in~\cite{FR:1991}. 
In FETI, a global problem is partitioned into smaller subproblems posed on subdomains, and continuity across subdomain interfaces is enforced by means of Lagrange multipliers.

To illustrate, let $\Omega$ be a bounded polygonal domain in $\mathbb{R}^2$, and consider the model Poisson equation
\begin{equation}
\label{Poisson}
\begin{aligned}
- \Delta u &= f \quad \text{in } \Omega, \\
u &= 0 \quad \text{on } \partial \Omega.
\end{aligned}
\end{equation}
Assume that the domain $\Omega$ is decomposed into $J$ nonoverlapping polygonal subdomains $\{ \Omega_j \}_{j=1}^J$ with characteristic subdomain diameter $H>0$, where each $\Omega_j$ is a union of coarse elements from a global conforming mesh $\mathcal{T}_H$. 
We further assume that each coarse element in $\mathcal{T}_H$ is itself a union of fine elements from a fine mesh $\mathcal{T}_h$. 
For $i<j$, let $\Gamma_{ij} = \partial \Omega_i \cap \partial \Omega_j$ denote the interface between adjacent subdomains, and define $\Gamma = \bigcup_{i<j} \Gamma_{ij}$.

In what follows, the indices $i<j$ are taken over $1,\dots,J$. 
Let $\mathcal{T}_h^j$ denote the restriction of the global triangulation $\mathcal{T}_h$ to $\Omega_j$. 
Then $\mathcal{T}_h^i$ and $\mathcal{T}_h^j$ share nodal points along the interface $\Gamma_{ij}$. 
On each $\Omega_j$, we consider the space of continuous, piecewise linear finite elements on $\mathcal{T}_h^j$ that vanish on $\partial \Omega_j \cap \partial \Omega$, and denote its restriction to the interface $\partial \Omega_j$ by $V_j$. 
We also define the product space $V = \prod_{j=1}^J V_j$. 
Note that functions in $V$ are, in general, discontinuous across $\Gamma$.

The problem of interest is
\begin{equation}
\label{FETI_constrained}
\min_{v = (v_j)_{j=1}^J \in V} \left\{ \frac{1}{2} (Av, v) - (f, v) \right\}
\quad \text{subject to} \quad Bv = 0,
\end{equation}
where the matrix $A$ and the vector $f$ are defined by
\begin{equation}
\label{discrete_harmonic}
(Av, w) = \sum_{j=1}^J \int_{\Omega_j} \nabla \mathcal{H}_j v_j \cdot \nabla \mathcal{H}_j w_j \,dx, 
\qquad
(f, v) = \sum_{j=1}^J \int_{\Omega_j} f \,\mathcal{H}_j v_j \,dx,
\quad v, w \in V,
\end{equation}
where $\mathcal{H}_j$ denotes the discrete harmonic extension in $\Omega_j$ associated with $\mathcal{T}_h^j$ (see~\cite{TW:2005,XZ:1998} for details), 
$B$ is a full-rank matrix with entries $0$ and $\pm 1$ enforcing continuity along $\Gamma$, 
and $(\cdot,\cdot)$ denotes the Euclidean inner product. 
It is straightforward to observe that~\eqref{FETI_constrained} is equivalent to the restriction of the finite element discretization of~\eqref{Poisson} on the global mesh $\mathcal{T}_h$ to the interface $\Gamma$.

By enforcing the constraint $Bv = 0$ in~\eqref{FETI_constrained} through the introduction of a Lagrange multiplier $\lambda \in W$, where $W$ denotes the vector space of Lagrange multipliers, we obtain the following saddle point problem, which is of the form~\eqref{model_saddle}:
\begin{equation}
\label{FETI_saddle}
\begin{bmatrix} A & B^t \\ B & 0 \end{bmatrix}
\begin{bmatrix} u \\ \lambda \end{bmatrix}
= \begin{bmatrix} f \\ 0 \end{bmatrix}.
\end{equation}
Note that $A$ is semi-SPD, owing to subdomains $\Omega_j$ that do not intersect $\partial \Omega$.
Finally, by eliminating $u$ in~\eqref{FETI_saddle} as in \cref{Sec:Schur}, we obtain the following SPD linear system corresponding to~\eqref{Schur_singular}:
\begin{equation}
\label{FETI}
    S_0 \lambda_0 = d_0,
    \quad \text{where} \quad
    S_0 = PB A^+ (PB)^t \colon W_0 \to W_0,\ 
    d_0 \in W_0,
\end{equation}
and $W_0$ and $P$ are defined in the same way as in~\eqref{W0} and~\eqref{P}, respectively.

Now we estimate the condition number of $S_0$ in~\eqref{FETI} using the proposed framework. 
We begin by summarizing some useful properties of the matrices $A$ and $B$. 
We have the following estimates for the extremal eigenvalues of $A$ (cf.~\cite[equations~(29) and~(30)]{FMR:1994}).

\begin{lemma}
\label{Lem:FETI_S}
The matrix $A$ defined in~\eqref{discrete_harmonic} satisfies
\begin{equation}
\label{FETI_S}
    \lambda_{\min}(A) \gtrsim \frac{h}{H}, 
    \quad
    \lambda_{\max}(A) \lesssim 1,
\end{equation}
where $\lambda_{\min}$ denotes the smallest nonzero eigenvalue.
\end{lemma}
\begin{proof}
For any $v = (v_j)_{j=1}^J \in \mathcal{R}(A)$, we have
\begin{equation*}
(A v, v)
\stackrel{\eqref{discrete_harmonic}}{\eqsim} \sum_{j=1}^J | v_j |_{H^{\frac{1}{2}} (\partial \Omega_j)}^2
\gtrsim \frac{1}{H} \sum_{j=1}^J \| v_j \|_{L^2 (\partial \Omega_j)}^2 
\eqsim \frac{h}{H} (v, v),
\end{equation*}
where $\gtrsim$ follows from the Poincar\'{e} inequality applied to each subdomain, and $\eqsim$ follows from standard scaling arguments~\cite[Section~3.4]{TW:2005}.
In particular, since $v \in \mathcal{R}(A) = \mathcal{N}(A)^{\perp}$, the Poincar\'{e} inequality also applies to floating subdomains $\Omega_j$ that do not intersect $\partial \Omega$.
On the other hand, the inverse inequality yields
\begin{equation*}
(A v, v)
\stackrel{\eqref{discrete_harmonic}}{\eqsim} \sum_{j=1}^J | v_j |_{H^{\frac{1}{2}} (\partial \Omega_j)}^2
\lesssim \frac{1}{h} \sum_{j=1}^J \| v_j \|_{L^2 (\partial \Omega_j)}^2 
\eqsim (v, v),
\end{equation*}
which completes the proof.
\end{proof}

\begin{remark}
\label{Rem:FETI_S}
Let $\hat{V}$ be a subspace of $V$ consisting of functions that are continuous across $\Gamma$.
Then the restriction $A|_{\hat{V}}$ is SPD, since continuity across $\Gamma$ eliminates the piecewise constant kernel modes associated with floating subdomains.
For $A|_{\hat{V}}$, we cannot apply the Poincar\'{e} inequality on each $\partial \Omega_j$ as in \cref{Lem:FETI_S}; instead, we obtain the following weaker estimate for the minimum eigenvalue (see, e.g.,~\cite[Lemma~4.11]{TW:2005} and~\cite[Lemma~3.1]{XZ:1998}), derived using the global Poincar\'{e} inequality on $\Omega$:
\begin{equation*}
(Av, v) \gtrsim H \sum_{j=1}^J \| v_j \|_{L^2 (\partial \Omega_j)}^2 \eqsim H h (v, v),
\quad v = (v_j)_{j=1}^J \in \hat{V}.
\end{equation*}
\end{remark}

In addition, as shown in~\cite[equation~(22)]{FMR:1994}, since the matrix $B$ consists only of entries $0$ and $\pm 1$, an elementary calculation gives
\begin{equation}
\label{FETI_B}
    \lambda_{\min}(B B^t) \gtrsim 1, 
    \quad
    \lambda_{\max}(B B^t) = \lambda_{\max}(B^t B) \lesssim 1.
\end{equation}

By \cref{Thm:Schur_singular}, the minimum eigenvalue of $S_0$ is estimated as
\begin{multline*}
\lambda_{\min} (S_0)
    = \inf_{0 \neq \mu \in W_0} \sup_{v \in \mathcal{R} (A),\ PBv = \mu} \frac{\| \mu \|^2}{(A v, v)}
    \stackrel{\eqref{FETI_S}}{\gtrsim} \inf_{0 \neq \mu \in W_0} \sup_{v \in \mathcal{R} (A),\ PBv = \mu} \frac{\| \mu \|^2}{\| v \|^2} \\
    \geq \inf_{0 \neq \mu \in W_0} \frac{\| \mu \|^2}{\| (PB)^t (PB(PB)^t)^{-1} \mu \|^2}
    = \inf_{0 \neq \mu \in W_0} \frac{ \| \mu 
    \|^2}{( (PB(PB)^t)^{-1} \mu, \mu)}
    \stackrel{\eqref{FETI_B}}{\gtrsim} 1,
\end{multline*}
where, in the second inequality, we substitute
\begin{equation*}
v = (PB)^t (PB(PB)^t)^{-1} \mu \in \mathcal{R} (A),
\end{equation*}
which satisfies $PBv = \mu$.
Here, the operators $(PB)^t$ and $PB (PB)^t$ are regarded as
\begin{equation*}
(PB)^t \colon W_0 \to \mathcal{R} (A), 
\quad
PB (PB)^t \colon W_0 \to W_0,
\end{equation*}
in view of~\eqref{domain_restriction}.
On the other hand, the maximum eigenvalue of $S_0$ is estimated as
\begin{equation*}
    \lambda_{\max} (S_0)
    = \sup_{0 \neq v \in \mathcal{R} (A)} \frac{\| P B v \|^2}{(A v, v)}
    \stackrel{\eqref{FETI_B}}{\lesssim} \sup_{0 \neq v \in \mathcal{R} (A)} \frac{\| v \|^2}{(A v, v)}
    = \lambda_{\min} (A)^{-1}
    \stackrel{\eqref{FETI_S}}{\eqsim} \frac{H}{h}.
\end{equation*}
Consequently, we get
\begin{equation*}
    \kappa (S_0) \lesssim \frac{H}{h}.
\end{equation*}
This result agrees with~\cite[Theorem~3.2]{FMR:1994}, where the condition number of $S_0$ was first estimated.

Next, we consider the following scaled Dirichlet preconditioner:
\begin{equation*}
    L_{\mathrm{FETI}} = P B_D A (P B_D)^t,
\end{equation*}
where $B_D = D_{\mathrm{FETI}} B$ is a scaled jump matrix~\cite{SM:2008,TW:2005}, with $D_{\mathrm{FETI}}$ a block-diagonal scaling matrix chosen such that $B B_D^t = I$. 
In other words, $B_D^t$ is a right inverse of $B$. 
In this case, we can apply \cref{Cor:Schur_singular_preconditioned} to estimate the extremal eigenvalues of the preconditioned operator $L_{\mathrm{FETI}} S_0$.

The only technical ingredient we require is the well-known vertex--edge lemma~\cite[Section~4.3]{XZ:1998}, stated in \cref{Lem:vertex_edge}. 
In what follows, edges are understood as relatively open subsets, i.e., without their endpoints. 
For a subset $K \subset \partial \Omega_j$, the restriction $I_K^0 v_j \in V_j$ of $v_j \in V_j$ is defined by
\begin{equation}
\label{restriction}
I_K^0 v_j = 
\begin{cases}
v_j & \text{at nodal points in } K, \\
0   & \text{at all other nodal points}.
\end{cases}
\end{equation}

\begin{lemma}
\label{Lem:vertex_edge}
For each subdomain $\Omega_j \subset \mathbb{R}^2$, the following estimates hold:
\begin{enumerate}[(a)]
\item If $x$ is a vertex of $\partial \Omega_j$, then we have
\begin{equation*}
    \| I_x^0 v_j \|_{H^{\frac{1}{2}} (\partial \Omega_j)}^2 \lesssim \left( 1 + \log \frac{H}{h} \right) \| v_j \|_{H^{\frac{1}{2}} (\partial \Omega_j)}^2, \quad v_j \in V_j.
\end{equation*}
\item If $e$ is an edge of $\partial \Omega_j$, then we have
\begin{equation*}
    \| I_e^0 v_j \|_{H^{\frac{1}{2}} (\partial \Omega_j)}^2 \lesssim \left( 1 + \log \frac{H}{h} \right)^2 \| v_j \|_{H^{\frac{1}{2}} (\partial \Omega_j)}^2, \quad v_j \in V_j.
\end{equation*}
\end{enumerate}
\end{lemma}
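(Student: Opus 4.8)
The plan is to treat the two estimates separately, reducing each to two standard ingredients: a discrete Sobolev inequality that controls pointwise values with a single logarithmic factor, and explicit energy bounds for the nodal cutoff functions associated with a vertex and with an edge. Throughout I will use the equivalence between the $H^{1/2}(\partial\Omega_j)$-norm and the energy of the discrete harmonic extension into $\Omega_j$, which lets me pass freely between boundary and interior estimates, and I will write $\| v \|_{H^{1/2}(\partial\Omega_j)}$ for the (appropriately $H$-scaled) trace norm of $v \in V_j$ on the right-hand side.

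For part (a), I would first observe that $I_x^0 v = v(x)\,\theta_x$, where $\theta_x \in V_j$ is the nodal basis function equal to $1$ at the vertex $x$ and $0$ at every other node. Hence $\| I_x^0 v \|_{H^{1/2}(\partial\Omega_j)}^2 = |v(x)|^2 \, \| \theta_x \|_{H^{1/2}(\partial\Omega_j)}^2$, and the proof splits into two factors. The first factor is handled by the discrete Sobolev inequality $|v(x)|^2 \lesssim (1 + \log(H/h)) \, \| v \|_{H^{1/2}(\partial\Omega_j)}^2$, which supplies the single logarithmic factor. The second factor is a purely local computation showing $\| \theta_x \|_{H^{1/2}(\partial\Omega_j)}^2 \lesssim 1$: the seminorm equals the energy of the discrete harmonic extension of the vertex hat function, which is bounded by the $O(1)$ energy of the $P_1$ basis function itself, while the scaled $L^2$ contribution is $O(h/H) = O(1)$. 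Multiplying the two bounds yields (a).

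For part (b), I would write $I_e^0 v = I^h(\vartheta_e v)$, where $\vartheta_e$ is the edge cutoff that equals $1$ at the interior nodes of $e$ and $0$ at all remaining nodes (in particular at the endpoints of $e$), and $I^h$ is the nodal interpolant. Since this function is supported on $\bar e$ and vanishes at $\partial e$, its $H^{1/2}(\partial\Omega_j)$-norm is equivalent to its $H^{1/2}_{00}(e)$-norm, the norm of the zero-extension across the edge endpoints. The task therefore reduces to bounding $\| I^h(\vartheta_e v) \|_{H^{1/2}_{00}(e)}^2$. Here I would split $v = \bar v + (v - \bar v)$ into its edge average and fluctuation: the fluctuation contributes no additional logarithmic factor and is controlled by $|v|_{H^{1/2}(e)}^2$ via a Hardy-type inequality, while the constant part reduces to $|\bar v|^2 \| \vartheta_e \|_{H^{1/2}_{00}(e)}^2$, whose distance-weighted endpoint integral $\int_e |\vartheta_e(s)|^2 / \mathrm{dist}(s,\partial e)\,ds \eqsim \int_h^H t^{-1}\,dt \eqsim (1 + \log(H/h))$. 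Combining this with the logarithmic pointwise bound $|\bar v|^2 \le \|v\|_{L^\infty(e)}^2 \lesssim (1+\log(H/h))\| v \|_{H^{1/2}(\partial\Omega_j)}^2$ produces the second logarithmic factor, giving the squared power in (b).

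The main obstacle is the endpoint behavior in part (b): the $H^{1/2}_{00}(e)$-norm weights function values by the reciprocal distance to the vertices of $e$, and it is precisely this weighting, applied to the truncated product $I^h(\vartheta_e v)$ near $\partial e$, that generates the extra logarithmic factor and must be estimated carefully through the product/fluctuation splitting above. By contrast, part (a) is essentially immediate once the discrete Sobolev inequality and the $O(1)$ energy of the vertex function are in hand.
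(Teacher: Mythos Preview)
The paper does not actually prove \cref{Lem:vertex_edge}; it merely states the result and cites \cite[Section~4.3]{XZ:1998} as the ``well-known vertex--edge lemma,'' using it as a black box in the FETI analysis. There is therefore no proof in the paper to compare against. Your sketch is precisely the standard argument one finds in the cited reference and in \cite{TW:2005}: for (a), factor $I_x^0 v = v(x)\theta_x$ and combine the discrete Sobolev inequality with the $O(1)$ energy of the vertex hat function; for (b), pass to the $H^{1/2}_{00}(e)$-norm, split $v$ into its edge mean and fluctuation, and pick up one logarithmic factor from the $L^\infty$ bound and another from the weighted endpoint integral of the edge cutoff. One minor caution: the claim that the fluctuation contributes ``no additional logarithmic factor'' is correct but is the most delicate step, since $v-\bar v$ does not itself vanish at the edge endpoints and one must still control the $H^{1/2}_{00}$ weight there; the standard route is to subtract the endpoint hat contributions and bound those via the discrete Sobolev inequality applied to $v-\bar v$, which costs at most one log and is absorbed into the squared factor. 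With that detail in hand your outline matches the literature proof.
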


Now we are ready to analyze the preconditioned operator $L_{\mathrm{FETI}} S_0$. 
By \cref{Cor:Schur_singular_preconditioned}, we have 
\begin{equation*}
\lambda_{\min}(L_{\mathrm{FETI}} S_0) \geq 1.
\end{equation*}
To estimate the maximum eigenvalue of $L_{\mathrm{FETI}} S_0$, \cref{Cor:Schur_singular_preconditioned} with $\bar{B} = B_D$ implies that it suffices to bound $( A (P B_D)^t PB v, (P B_D )^t PB v)$ for any $v \in \mathcal{R} (A)$. 
Take any $v \in \mathcal{R} (A)$ and choose $\phi \in \mathcal{N}(A)$ such that $B (v+\phi) \in W_0$. 
Then we have
\begin{equation}
\label{FETI_phi}
P^t P B(v+\phi) = B(v+\phi).
\end{equation}
It follows that
\begin{equation*}
\begin{split}
( A &(P B_D)^t PB v, (P B_D)^t PB v)
\stackrel{\eqref{FETI_phi}}{=} ( A B_D^t B(v+\phi), B_D^t B(v+\phi)) \\
&\eqsim \sum_{j=1}^J \left| (B_D^t B(v+\phi))_j \right|_{H^{\frac{1}{2}}(\partial \Omega_j)}^2 \\
&\lesssim \sum_{j=1}^J \left( \sum_{e \text{:~edge of } \Omega_j} | I_e^0(v+\phi) |_{H^{\frac{1}{2}}(\partial \Omega_j)}^2 
+ \sum_{x \text{:~vertex of } \Omega_j} | I_x^0(v+\phi) |_{H^{\frac{1}{2}}(\partial \Omega_j)}^2 \right) \\
&\lesssim \left( 1 + \log \frac{H}{h} \right)^2 \sum_{j=1}^J | v+\phi |_{H^{\frac{1}{2}}(\partial \Omega_j)}^2 \\
&\eqsim \left( 1 + \log \frac{H}{h} \right)^2 (A v,v),
\end{split}
\end{equation*}
where the first inequality follows from the definitions of $B$ and $B_D$ together with the triangle inequality, and the second inequality follows from \cref{Lem:vertex_edge}. 
Here we require mild geometric assumptions on the domain decomposition $\{ \Omega_j \}_{j=1}^J$, namely that the numbers of vertices and edges of the subdomains are uniformly bounded and that the number of subdomains sharing a common vertex is also uniformly bounded. 
Hence, by \cref{Cor:Schur_singular_preconditioned}, we deduce that
\begin{equation*}
    \lambda_{\max}(L_{\mathrm{FETI}} S_0) \lesssim \left( 1 + \log \frac{H}{h} \right)^2.
\end{equation*}

In summary, we obtain the following condition number bound for the preconditioned operator $L_{\mathrm{FETI}} S_0$ (cf.~\cite[Theorem~6.15]{TW:2005}):
\begin{equation*}
\kappa(L_{\mathrm{FETI}} S_0) \lesssim \left( 1 + \log \frac{H}{h} \right)^2.
\end{equation*}
We note that the unscaled Dirichlet preconditioner, namely $PB A (PB)^t$, was first proposed in~\cite{FMR:1994} and analyzed in~\cite{MT:1996}. 
In the unscaled case, as discussed in~\cite{MT:1996}, the minimum eigenvalue admits a weaker estimate in certain scenarios.

\subsection{FETI-DP}
Next, we consider FETI-DP~\cite{FLLPR:2001,KWD:2002}, 
which is one of the most widely used variants of FETI. 
It has been applied not only to linear problems but also to more complex nonlinear problems arising in computational mechanics~\cite{KLR:2014,LP:2021a}.

While the FETI formulation~\eqref{FETI_constrained} enforces continuity along the entire interface $\Gamma$ through the constraint, FETI-DP instead imposes continuity at subdomain corners directly by restricting the solution space to a subspace $\tilde{V} \subset V$. 
The space $\tilde{V}$ consists of functions in $V$ that are continuous at subdomain corners.
Continuity along the interior of each subdomain edge is enforced by constraints. 
Namely, we consider the following formulation:
\begin{equation}
\label{FETI-DP_constrained}
\min_{v = (v_j)_{j=1}^J \in \tilde{V}} \left\{ \frac{1}{2} (\tilde{A} v, v) - (\tilde{f}, v) \right\},
\quad \text{subject to} \quad
B v = 0,
\end{equation}
where the matrix $\tilde{A}$ and the vector $\tilde{f}$ are defined analogously to~\eqref{discrete_harmonic}; see~\cite{KWD:2002,TW:2005} for details. 
Here $B$ is a full-rank matrix with entries $0$ and $\pm 1$ that enforces continuity along the interior of subdomain edges, and $(\cdot,\cdot)$ denotes the Euclidean inner product. 

By enforcing the constraint $B v = 0$ in~\eqref{FETI-DP_constrained} through the introduction of a Lagrange multiplier $\lambda \in W$, where $W$ denotes the vector space of Lagrange multipliers, we obtain the saddle point problem
\begin{equation}
\label{FETI-DP_saddle}
\begin{bmatrix}
\tilde{A} & B^t \\ B & 0 
\end{bmatrix}
\begin{bmatrix}
    u \\ \lambda
\end{bmatrix}
= \begin{bmatrix}
    \tilde{f} \\ 0
\end{bmatrix}.
\end{equation}
In contrast to the FETI formulation~\eqref{FETI_saddle}, in~\eqref{FETI-DP_saddle} the matrix $\tilde{A}$ is SPD. 
Therefore, by eliminating $u$ in~\eqref{FETI-DP_saddle}, we obtain the following Schur complement system corresponding to~\eqref{Schur}:
\begin{equation}
\label{FETI-DP}
S \lambda = d, 
\quad \text{where} \quad
S = B \tilde{A}^{-1} B^t, 
\quad d = B \tilde{A}^{-1} \tilde{f}.
\end{equation}

In~\eqref{FETI-DP_constrained}, since the matrix $B$ enforces continuity only at nodal points located in the interior of subdomain edges, each nodal point is affected by $B$ only once. 
Hence, we have the following identity~\cite[equation~(3.2)]{MT:2001}:
\begin{equation}
\label{FETI-DP_B}
B B^t = 2I.
\end{equation}
Thus, if we define $\bar{B} = \tfrac{1}{2} B$, then $\bar{B}^t$ is a right inverse of $B$.

As FETI-DP involves functions that are continuous at subdomain corners, we require certain Poincar\'{e}-type inequalities associated with subdomain corners in order to analyze FETI-DP. 
In \cref{Lem:Poincare}, which summarizes results from~\cite[Lemma~2]{LPP:2021} and~\cite[Lemma~5.1]{MT:2001}, $L^2$ and $H^{\frac{1}{2}}$ estimates are provided for functions vanishing at subdomain corners. 
In what follows, let $I_H$ denote the linear coarse interpolation operator associated with the coarse triangulation $\mathcal{T}_H$.

\begin{lemma}
\label{Lem:Poincare}
For each subdomain $\Omega_j \subset \mathbb{R}^2$, the following estimates hold:
\begin{equation*}
\begin{aligned}
\| v_j - I_H v_j \|_{L^2 (\partial \Omega_j)}^2 \lesssim H \left( 1 + \log \frac{H}{h} \right) | v_j |_{H^{\frac{1}{2}} (\partial \Omega_j)}^2, \\
\sum_{e \text{:~edge of } \Omega_j} | I_e^0 (v_j - I_H v_j) |_{H^{\frac{1}{2}} (\partial \Omega_j)}^2 \lesssim \left( 1 + \log \frac{H}{h} \right)^2 | v_j |_{H^{\frac{1}{2}} (\partial \Omega_j)}^2,
\end{aligned}
\quad v_j \in V_j,
\end{equation*}
where $I_e^0$ was defined in~\eqref{restriction}.
\end{lemma}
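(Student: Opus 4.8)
The plan is to exploit the single structural fact that underlies both estimates: the function $w_j := v_j - I_H v_j$ vanishes at every corner of $\partial \Omega_j$, since $I_H$ interpolates $v_j$ at the coarse nodes, which include the subdomain corners. Consequently $w_j$ restricts to a function on each edge that vanishes at both endpoints, and the two bounds reduce to classical substructuring estimates for such corner-vanishing finite element functions. The two technical ingredients I would invoke are the discrete Sobolev inequality on $\partial \Omega_j$, namely $\| u \|_{L^\infty(\partial \Omega_j)}^2 \lesssim (1 + \log (H/h)) \| u \|_{H^{\frac{1}{2}}(\partial \Omega_j)}^2$, which is the sole source of a single logarithmic factor relating pointwise values to the $H^{\frac{1}{2}}$ norm, and the edge cut-off lemma, which controls the $H^{\frac{1}{2}}(\partial \Omega_j)$ seminorm of a zero-extended edge restriction in terms of the full (scaled) $H^{\frac{1}{2}}$ norm on $\partial \Omega_j$, with one logarithmic factor when the function vanishes at the endpoints of the edge.

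For the first estimate I would subtract the boundary average $\bar v_j$ and split $w_j = (v_j - \bar v_j) - (I_H v_j - \bar v_j)$. The first piece is handled by the standard Poincar\'e inequality $\| v_j - \bar v_j \|_{L^2(\partial \Omega_j)}^2 \lesssim H \, | v_j |_{H^{\frac{1}{2}}(\partial \Omega_j)}^2$. For the second piece, note that $I_H v_j - \bar v_j$ is piecewise linear on $\partial \Omega_j$ and determined by the corner deviations $v_j(x) - \bar v_j$; bounding each such deviation by $\| v_j - \bar v_j \|_{L^\infty(\partial \Omega_j)}$ and applying the discrete Sobolev inequality together with Poincar\'e yields $\| I_H v_j - \bar v_j \|_{L^2(\partial \Omega_j)}^2 \lesssim H (1 + \log (H/h)) \, | v_j |_{H^{\frac{1}{2}}(\partial \Omega_j)}^2$. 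Combining the two pieces gives the asserted $L^2$ bound, with the logarithmic factor entering only through the discrete Sobolev step.

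For the second estimate I would apply the edge cut-off lemma to $w_j$ edge by edge. Since $w_j$ vanishes at the endpoints of each edge $e$, the zero extension $I_e^0 w_j$ is genuinely the edge restriction, and the refined cut-off estimate bounds $| I_e^0 w_j |_{H^{\frac{1}{2}}(\partial \Omega_j)}^2$ by $(1 + \log(H/h))$ times $| w_j |_{H^{\frac{1}{2}}(\partial \Omega_j)}^2 + H^{-1} \| w_j \|_{L^2(\partial \Omega_j)}^2$. The seminorm term is controlled using $| I_H v_j |_{H^{\frac{1}{2}}(\partial \Omega_j)}^2 \lesssim (1 + \log(H/h)) \, | v_j |_{H^{\frac{1}{2}}(\partial \Omega_j)}^2$, while the scaled $L^2$ term is controlled precisely by the first estimate, whose factor $H$ cancels the weight $H^{-1}$ and contributes the crucial second logarithm. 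Summing over the uniformly bounded number of edges of $\Omega_j$ then produces the bound $(1 + \log(H/h))^2 \, | v_j |_{H^{\frac{1}{2}}(\partial \Omega_j)}^2$.

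The main obstacle, and the reason the result is delicate, is the sharp bookkeeping of the logarithmic factors. Each natural subestimate carries a logarithm, and a crude composition that first bounds the generic $H^{\frac{1}{2}}$ norm of $w_j$ and then applies the generic two-log edge lemma would overcount and yield $(1 + \log(H/h))^3$ rather than the sharp $(1 + \log(H/h))^2$. Staying sharp requires routing the corner (low-frequency) content through the first estimate exactly once, using the $H$-weighted $L^2$ bound to absorb one logarithm rather than generating a new one, and invoking the corner-vanishing version of the edge lemma rather than its generic form. These refined estimates are exactly those established in the cited works, so the remaining work is assembling them with the correct $H$-scalings rather than developing new machinery.
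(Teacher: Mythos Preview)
The paper does not prove \cref{Lem:Poincare}; it merely states the estimates and cites \cite[Lemma~2]{LPP:2021} and \cite[Lemma~5.1]{MT:2001} as the sources. There is therefore no proof in the paper to compare against.

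Your sketch is essentially the standard argument found in those references: exploit that $w_j = v_j - I_H v_j$ vanishes at subdomain corners, use the discrete Sobolev inequality to control corner values with one logarithmic factor, apply Poincar\'{e} on $\partial\Omega_j$ for the $L^2$ bound, and then feed the first estimate into the edge cut-off lemma for corner-vanishing functions to obtain the second. Your remark about the sharp log bookkeeping is exactly the right diagnosis of where the difficulty lies. One point worth tightening: the ``refined cut-off estimate'' you invoke, namely $|I_e^0 w_j|_{H^{1/2}(\partial\Omega_j)}^2 \lesssim (1+\log(H/h))\bigl(|w_j|_{H^{1/2}(\partial\Omega_j)}^2 + H^{-1}\|w_j\|_{L^2(\partial\Omega_j)}^2\bigr)$ for corner-vanishing $w_j$, is correct but is itself a nontrivial lemma (it is precisely the improvement over the generic two-log edge lemma that corner vanishing buys), and you should cite it explicitly rather than treat it as folklore. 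With that caveat, your assembly of the pieces, including the bound $|I_H v_j|_{H^{1/2}(\partial\Omega_j)}^2 \lesssim (1+\log(H/h))\,|v_j|_{H^{1/2}(\partial\Omega_j)}^2$ via corner-value control, correctly yields the sharp $(1+\log(H/h))^2$ and matches the cited proofs.
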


Now, we estimate the extremal eigenvalues of $S$ given in~\eqref{FETI-DP}.
By \cref{Thm:Schur}, we obtain
\begin{equation*}
\lambda_{\min} (S) = \inf_{0 \neq \lambda \in W} \sup_{v \in \tilde{V},\ Bv = \lambda} \frac{\| \lambda \|^2}{(\tilde{A}v, v)}
\geq \inf_{0 \neq \lambda \in W} \frac{\| \lambda \|^2}{( \tilde{A} \bar{B}^t \lambda, \bar{B}^t \lambda)}
\stackrel{\eqref{FETI-DP_B}}{\gtrsim} \lambda_{\max} (\tilde{A})^{-1} 
\stackrel{\eqref{FETI_S}}{\gtrsim} 1.
\end{equation*}
On the other hand, since $\mathcal{R}(I_H) \subset \mathcal{N} (B)$, for any $v = (v_j)_{j=1}^J \in \tilde{V}$, we have
\begin{multline*}
\| B v \|^2
= \| B (v - I_H v ) \|^2
\stackrel{\eqref{FETI-DP_B}}{\lesssim} \| v - I_H v \|^2
\eqsim h^{-1} \sum_{j=1}^J \| v_j - I_H v_j \|_{L^2 (\partial \Omega_j)}^2 \\
\lesssim \frac{H}{h} \left( 1 + \log \frac{H}{h} \right) \sum_{j=1}^J | v_j |_{H^{\frac{1}{2}} (\partial \Omega_j)}^2
\eqsim \frac{H}{h} \left( 1 + \log \frac{H}{h} \right) (\tilde{A} v, v),
\end{multline*}
where the last inequality is due to \cref{Lem:Poincare}.
Hence, \cref{Thm:Schur} implies that
\begin{equation*}
    \lambda_{\max} (S) \lesssim \frac{H}{h} \left( 1 + \log \frac{H}{h} \right).
\end{equation*}
Consequently, we deduce that
\begin{equation*}
\kappa (S) \lesssim \frac{H}{h} \left( 1 + \log \frac{H}{h} \right),
\end{equation*}
which agrees with the existing result in~\cite[Proposition~3]{LPP:2021}.

Next, we consider the following Dirichlet preconditioner:
\begin{equation*}
L_{\mathrm{DP}} = \bar{B} \tilde{A} \bar{B}^t,
\end{equation*}
By \cref{Cor:Schur_preconditioned}, we have $\lambda_{\min} (L_{\mathrm{DP}} S) \geq 1$.
Moreover, for any $v = (v_j)_{j=1}^J \in \tilde{V}$, it follows that
\begin{equation*}
\begin{split}
(\tilde{A} \bar{B}^t B v, \bar{B}^t B v)
&= ( \tilde{A} \bar{B}^t B (v - I_H v), \bar{B}^t B (v - I_H v)) \\
&\eqsim \sum_{j=1}^J | \bar{B}^t B (v - I_H v) |_{H^{\frac{1}{2}} (\partial \Omega_j)}^2 \\
&\lesssim \sum_{j=1}^J \sum_{e \text{:~edge of } \Omega_j} \left| I_e^0 (v_j - I_H v_j ) \right|_{H^{\frac{1}{2}} (\partial \Omega_j)}^2 \\
&\lesssim \left( 1 + \log \frac{H}{h} \right)^2 \sum_{j=1}^J | v_j |_{H^{\frac{1}{2}} (\partial \Omega_j)}^2 \\
&\eqsim \left(1 + \log \frac{H}{h} \right)^2 (\tilde{A}v, v),
\end{split}
\end{equation*}
where the first inequality follows from the definitions of $B$ and $\bar{B}$ together with the triangle inequality, and the second inequality is due to \cref{Lem:Poincare}.
This implies
\begin{equation*}
    \lambda_{\max} (L_{\mathrm{DP}} S) \lesssim \left(1 + \log \frac{H}{h} \right)^2.
\end{equation*}

Finally, we obtain the following condition number bound for the preconditioned operator $L_{\mathrm{DP}} S$ (cf.~\cite[Theorem~5.2]{MT:2001}):
\begin{equation*}
    \kappa(L_{\mathrm{DP}} S) \lesssim \left(1 + \log \frac{H}{h} \right)^2.
\end{equation*}
The above proof can be extended to the cases of three dimensions or heterogeneous coefficients~\cite{KWD:2002,TW:2005} without major difficulties.

\begin{remark}
\label{Rem:BDDC}
Two closely related nonoverlapping domain decomposition methods to FETI and FETI-DP are BDD (balancing domain decomposition)~\cite{FP:2003,Mandel:1993} and BDDC (balancing domain decomposition by constraints)~\cite{Dohrmann:2003,FP:2003}. 
It has been shown in the literature~\cite{BS:2007,LW:2006,MDT:2005,MS:2007,SM:2008} that FETI/FETI-DP with scaled Dirichlet preconditioners, and BDD/BDDC are spectrally equivalent in the sense that their spectra agree except for the eigenvalues $0$ and $1$. 
In fact, BDD and BDDC can be analyzed within our framework (\cref{Thm:Schur_preconditioned,Thm:Schur_singular_preconditioned}) by essentially the same arguments used for FETI and FETI-DP. 
For brevity, we omit the details.
\end{remark}

\section{Concluding remarks}
\label{Sec:Conclusion}
In this paper, we have presented sharp estimates for the Schur complements arising in saddle point problems, by utilizing the auxiliary space theory introduced in~\cite{PX:2025}. 
We have also provided various examples, such as the augmented Lagrangian method, mixed finite element methods, and nonoverlapping domain decomposition methods, which highlight the usefulness of our results. 
In particular, the proposed framework simplifies the analysis compared to existing approaches by minimizing the algebraic effort required. 
We believe that the framework not only offers deeper insight into saddle point problems but also serves as a valuable tool for the design and analysis of new efficient iterative methods.

We conclude the paper with two remarks. 
The first concerns the case when the operator $B$ in~\eqref{model_saddle} is not surjective, so that~\eqref{model_saddle} becomes singular and admits nonunique solutions. 
Such a situation arises, for example, when nonredundant Lagrange multipliers are used to enforce the constraint $Bv = g$ in~\eqref{model_saddle_opt} (see, e.g.,~\cite{Pechstein:2013,TW:2005}). 
In this case, the Schur complement $S$ becomes semi-SPD. 
Although we do not treat this case in detail here for the sake of simplicity and readability, we note that it can be handled by essentially the same arguments presented in this paper, since the auxiliary space theory~\cite{PX:2025} provides a unified analysis of both SPD and semi-SPD problems.

The second remark concerns the extension of the proposed framework to a broader range of problems. 
As mentioned at several points in this paper (see, e.g.,~\cref{Rem:FR}), some of the results in this paper can already be extended to more general classes of convex optimization problems, although a full generalization is highly nontrivial. 
We note that there has recently been significant research on extending the theory of iterative methods for linear systems to convex optimization problems; see~\cite{JPX:2025} and the references therein. 
In this spirit, we view the extension of our results to convex optimization problems as an important direction for future work, which we expect to be highly impactful.

\bibliographystyle{siamplain}
\bibliography{refs_aux}

\end{document}